 \newtheorem{theorem}{Theorem}[section]
 \newtheorem{corollary}[theorem]{Corollary}
 \newtheorem{proposition}[theorem]{Proposition}
 \newtheorem{remark}[theorem]{Remark}
 \newtheorem{remarks}[theorem]{Remarks}
 \newtheorem{example}{Example}
 \numberwithin{equation}{section}
\newcommand{\e}{{\mathrm{e}}}
\newcommand{\im}{{\mathrm{Im}}}
\begin{document}

\title[The explicit matrix exponential]
 {Some Explicit Formulas for Matrix Exponential, Matrix Logarithm, the $n$th Power of Matrices and their Drazin Inverses}

\author{ Mohammed Mou\c{c}ouf and Said Zriaa}
\address{ Mohammed Mou\c{c}ouf and Said Zriaa,
University Chouaib Doukkali.
Department of Mathematics, Faculty of science
Eljadida, Morocco}
\email{moucou@hotmail.com}
\email{saidzriaa1992@gmail.com}
\subjclass{15A16, 15A09, 15B99}

\keywords{Matrix exponential; matrix logarithm; Drazin inverses; powers of matrices; Vandermonde matrices; projections of matrices}

\date{}

\begin{abstract}
In this work, new closed-form formulas for the matrix exponential are provided. Our method is direct and elementary, it gives tractable and manageable formulas not current in the extensive literature on this essential subject. Moreover, others are recuperated and generalized. As a consequence, we easily obtain the Chevalley--Jordan decomposition and the spectral projections of any matrix. In addition, closed-form expressions for the arbitrary positive powers of matrices and their Drazin inverses are presented. Using these results, an elegant explicit formula for logarithm of matrices is obtained. Several particular cases and examples are formulated to illustrate the methods presented in this paper.
\end{abstract}

\maketitle
\section{Introduction}
The theory of functions of matrices plays a central role in linear algebra and it is widely used in various applications in pure and applied mathematics. This theory was subsequently developed and has been extensively studied by many mathematicians. Among these functions, the most interesting are those corresponding to the scalar functions $\e^{x}$, $\log(x)$, $\frac{1}{x}$ and $x^{k}$ where $k$ is an integer.
In this article, we are interested in the computation of above listed functions of matrices.\\
\indent Section~\ref{sec:2} is devoted to the matrix exponential. The computation of this matrix has been studied by many researchers and has attracted great attention in diverse papers in the literature. For such calculation, several methods and algorithms have appeared in the last few decades (see for example~\cite{Tmapo,Bellman,Harri,Eileo,Liz,Moler,Howland}). In~\cite{Moler}, authors presented a careful investigation on various such efforts they attempted to describe all the methods that seem to be practical. J. L. Howland~\cite{Howland} presented a procedure that generalizes a method described in~\cite{Moler}. T. M. Apostol~\cite{Tmapo} presented an elegant and manageable approach that gave explicit formulas for some special cases. His method does not produce the general case when the characteristic polynomial of the matrix has multiple roots.\\
\indent Much of the difficulty of the computation of the matrix exponential is bypassed by the algorithm of Putzer~\cite{Putzer}. In~\cite{Eileo}, I. E. Leonard presented an alternative method intending to minimize the mathematical prerequisites. His approach requires exactly the solution of homogeneous linear differential equations with constant coefficients. To avoid solving the initial value problems for these differential equations, E. Liz~\cite{Liz} provided a method which requires only the knowledge of a basis for each solution space of these equations.\\
\indent Because of the ubiquitous appearance of the matrix exponential in linear dynamical systems and other important applications, our purpose is to present explicit formulas to simplify its presentation and practice. In addition, these formulas should be beneficial in future investigations involving the matrix exponential.\\
\indent Section~\ref{sec:3} is devoted to the powers and the Drazin inverse of matrices. The powers of matrices are of considerable importance and have a wide range of applications. There is a deep relation between the powers of matrices and the Drazin inverse (see, Mou\c{c}ouf~\cite{Mmou}). The Drazin inverse is of intrinsic interest, it has important properties that make it extremely useful and interesting in various applications including cryptography, Markov chains, functional analysis, differential-algebraic equations, stochastic process, multibody system, Control theory (see for example~\cite{Campbell,Buc,Weiy} and references therein). Many authors have investigated the Drazin inverse from different viewpoints. Here, we propose an interesting explicit expression for the Drazin inverse for matrices and their powers in terms of the eigenvalues. The advantage of our method is that it gives explicit tractable representations for the Drazin inverse of a matrix and needs only calculating certain polynomials of this matrix.\\
\indent In section~\ref{sec:4}  we consider the logarithm of matrices. This function of matrices appears in many parts of mathematics, applied natural sciences, and engineering. It has many applications in the study of Systems Theory and has received much interest in Control Theory (see~\cite{Njhigham,Cdahlbrandt} and references therein). If the matrix $A$ has no eigenvalues on the closed negative real axis $\mathbb{R}^{-}$, then $A$ has a unique logarithm with eigenvalues in the set $\lbrace z\in \mathbb{C}/-\pi<\im(z)<\pi\rbrace$. This unique logarithm is called the principal logarithm of $A$. This logarithm is of great interest and is needed in many applications.\\
\indent While there are various methods for the computation of the matrix exponential, relatively few ones exist for the matrix logarithm. The exact computation of the matrix logarithm reveals considerable difficulties. Most of the methods presented for computing such matrix are approximation methods~\cite{Njhigham,Cdahlbrandt}. Our motivation comes from the fact that few works have been considered for exhibiting compact formulas for the matrix logarithm. It is important to emphasize that other methods for computing the matrix logarithm require advanced theory such as matrix square roots, Schur decompositions, and Pad\'{e} approximants (see~\cite{Jamarrero,Jrcardoso2,Njhigham,Cdahlbrandt} and references therein).\\
\indent In a previous work, the first-named author of this paper has presented a method for obtaining easily from any given $\mathcal{P}$-canonical form of an arbitrary nonsingular $k\times k$ matrix $A$ with eigenvalues $\alpha_{1},\ldots,\alpha_{k}$, both a logarithm $B$ of $A$, the eigenvalues of $B$ may be chosen in advance arbitrarily as $\log(\alpha_{1}),\ldots,\log(\alpha_{k})$, and one of its $\mathcal{P}$-canonical forms (see, Mou\c{c}ouf~\cite{Mouc22}). This result is the key for obtaining our logarithm matrices formulas.\\
We propose simple, direct, and compact formulas to compute the logarithms and the principal logarithm of matrices without any restrictions on the norm. We note that the Putzer matrix representation of the logarithm of complex matrices~\cite{Cdahlbrandt}, or real matrices~\cite{Jrcardoso2}, requires the exact computations of some rational integrals. We also note that in a recent work~\cite{Jamarrero}, the authors are interested in developing the exact computation for the principal logarithm of matrices. More precisely, under some conditions on the norm, they compute exactly the principal logarithm matrix, but in this method it is necessary to solve a system of linear recursive equations, determine the Fibonacci-H\"{o}rner decomposition of the matrix, investigate the properties of generalized Fibonacci sequences and their Binet formula. In addition, their approach requires laborious calculations of some functions. Contrary to these works and other many works on this subject, our method does not require any cumbersome and laborious calculations. Additionally, our approach is general. The attractive feature of our proposed method lies in the possibility of choosing in advance the eigenvalues of logarithms of a matrix, and hence easily finding the principal logarithm of matrices.\\
\indent Among the essential tools used in this paper are the Vandermonde matrices and their inverses and certain polynomials. So it is convenient to fix some notations.\\
\indent Let $\alpha_{1},\alpha_{2},\ldots,\alpha_{s}$ be distinct elements of $\mathbb{C}$ and $m_{1},m_{2},\ldots,m_{s}$ be nonnegative integers. For a non-constant polynomial $P(x)=(x-\alpha_{1})^{m_{1}}(x-\alpha_{2})^{m_{2}}\cdots(x-\alpha_{s})^{m_{s}}$ of degree $n$, we denote by $L_{jk_{j}}(x)[P]$ the following polynomial
\begin{equation}\label{eq: 1.1}
L_{jk_{j}}(x)[P]=P_{j}(x)(x-\alpha_{j})^{k_{j}}\sum_{i=0}^{m_{j}-1-k_{j}}\frac{1}{i!}g^{(i)}_{j}(\alpha_{j})(x-\alpha_{j})^{i}
\end{equation}
where $1\leq j \leq s,\,\,0\leq k_{j} \leq m_{j}-1$ and
\begin{equation*}
P_{j}(x)=\prod_{i=1,i\neq j}^{s}(x-\alpha_{i})^{m_{i}}=\frac{P(x)}{(x-\alpha_{j})^{m_{j}}},1\leq j \leq s
\end{equation*}
and
$$ g_{j}(x)=(P_{j}(x))^{-1}$$
Here and further $L^{(l)}_{jk_{j}}(x)[P]$ means the $l$th derivative of $L_{jk_{j}}(x)[P]$.\\
According to~\cite{As1}, we can write every polynomial $Q$ of degree less than or equal to $n-1$ as
\begin{equation}\label{eq: 1.3}
Q=\sum_{j=1}^{s}(\sum_{k_{j}=0}^{m_{j}-1}\frac{1}{k_{j}!}Q^{(k_{j})}(\alpha_{j})L_{jk_{j}}(x)[P])
\end{equation}
This formula is of great importance, it is used in~\cite{Mmous} to invert the confluent Vandermonde matrix.
In~\cite{Mmous}, the confluent Vandermonde matrix associated with the polynomial $P$ of degree $n=m_{1}+m_{2}+\cdots+m_{s}$
\begin{equation*}
P(x)=(x-\alpha_{1})^{m_{1}}(x-\alpha_{2})^{m_{2}}\cdots(x-\alpha_{s})^{m_{s}},
\end{equation*}
is defined to be the following block matrix
\begin{equation}\label{eq: 1.4}
V_{G}(P) = (V_{1}\; V_{2}\; \ldots\; V_{s})
\end{equation}
where $\alpha_{1},\alpha_{2},\ldots,\alpha_{s}$ are distinct elements of $\mathbb{C}$.\\
The block matrix $V_{k}$ is of order $n\times m_{k}, k=1,\ldots,s$, and defined to be the matrix
\begin{equation*}
V_{k}=V_{G}((x-\alpha_{k})^{m_{k}})
\end{equation*}
 with entries
\[  (V_{k})_{ij}= \left\{ \begin{array}{ll}
         \binom{i-1}{j-1}\alpha_{k}^{i-j} & \mbox{if $i \geq j$}\\
        0, & \mbox{otherwise },\end{array} \right. \]
where $\binom{q}{p}$ denotes the binomial coefficient.\\
\indent For completeness, we recall the following Theorem and corollary (needed in the sequel) which provide an explicit closed-form for the inverse of the confluent Vandermonde matrices (for more details, see~\cite{Mmous}).
\begin{theorem}\label{Thm 1.2}
Let $P(x)=(x-\alpha_{1})^{m_{1}}(x-\alpha_{2})^{m_{2}}\cdots(x-\alpha_{s})^{m_{s}}$ be a polynomial of degree $n$, where $\alpha_{1}, \alpha_{2},\ldots,\alpha_{s}$ are distinct elements of $\mathbb{C}$. The explicit inverse of the confluent Vandermonde matrix $V_{G}(P)$ has the form
\begin{equation}\label{eq: 1.7}
V_{G}^{-1}(P) = \begin{pmatrix}
   \mathcal{L}_{1{m_{1}}} \\
   \mathcal{L}_{2{m_{2}}}  \\
    \vdots  \\
    \mathcal{L}_{sm_{s}}  \\

\end{pmatrix}
\end{equation}
for $r=1,2,\ldots,s$ the block matrix $\mathcal{L}_{rm_{r}}$ is of order $m_{r}\times n$ and given by
\begin{equation*}
\mathcal{L}_{rm_{r}}=\left(\frac{1}{(j-1)!}L^{(j-1)}_{r(i-1)}(0)[P]\right)_{1\leq i\leq m_{r},1\leq j\leq n}
\end{equation*}
More precisely,
\begin{equation*}
\mathcal{L}_{rm_{r}}= \begin{pmatrix}

  L_{r0}(0)[P] & L^{(1)}_{r0}(0)[P] & \cdots & \frac{1}{(n-1)!}L^{(n-1)}_{r0}(0)[P]  \\
  L_{r1}(0)[P] & L^{(1)}_{r1}(0)[P] & \cdots & \frac{1}{(n-1)!}L^{(n-1)}_{r1}(0)[P]  \\
    \vdots & \vdots & \cdots & \vdots \\
    L_{rm_{r}-1}(0)[P] & L^{(1)}_{rm_{r}-1}(0)[P] & \cdots & \frac{1}{(n-1)!}L^{(n-1)}_{rm_{r}-1}(0)[P]  \\
\end{pmatrix}
\end{equation*}
\end{theorem}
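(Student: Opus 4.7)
The plan is to verify directly that the block matrix $\mathcal{L}$ obtained by stacking $\mathcal{L}_{1m_1},\ldots,\mathcal{L}_{sm_s}$ is a left inverse of $V_G(P)$, by computing the product $\mathcal{L}\cdot V_G(P)$ and showing it equals $I_n$. Since $V_G(P)$ is a square matrix of size $n$, this will establish the stated closed form for $V_G^{-1}(P)$.

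The cornerstone of the argument is the Hermite-type interpolation property
\begin{equation*}
\frac{1}{l!}\,L^{(l)}_{jk_j}(\alpha_r)[P] \;=\; \delta_{jr}\,\delta_{k_j,l},\qquad 1\leq r\leq s,\ 0\leq l\leq m_r-1,
\end{equation*}
which I would prove first. The case $r\neq j$ is immediate from the factor $(x-\alpha_r)^{m_r}$ carried by $P_j(x)$. For $r=j$, I would use the fact that the inner sum defining $L_{jk_j}(x)[P]$ is exactly the Taylor polynomial of $g_j=1/P_j$ at $\alpha_j$ of order $m_j-1-k_j$, so $P_j(x)T_{jk_j}(x)-1$ vanishes to order at least $m_j-k_j$ at $\alpha_j$, whence
\begin{equation*}
L_{jk_j}(x)[P] \;=\; (x-\alpha_j)^{k_j} + O\bigl((x-\alpha_j)^{m_j}\bigr);
\end{equation*}
reading off Taylor coefficients up to order $m_j-1$ yields the identity.

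With this in hand, the $((r,i),(k,j'))$-entry of $\mathcal{L}\cdot V_G(P)$ equals
\begin{equation*}
\sum_{p=1}^{n} \frac{L^{(p-1)}_{r(i-1)}(0)[P]}{(p-1)!}\;\binom{p-1}{j'-1}\alpha_k^{p-j'}.
\end{equation*}
Recognizing the first factor as the coefficient of $x^{p-1}$ in $L_{r(i-1)}(x)[P]$ and expanding $x^{p-1}=\sum_{t}\binom{p-1}{t}(x-\alpha_k)^t\alpha_k^{p-1-t}$, the sum is exactly the coefficient of $(x-\alpha_k)^{j'-1}$ in the Taylor expansion of $L_{r(i-1)}(x)[P]$ around $\alpha_k$, i.e.\ $\tfrac{1}{(j'-1)!}L^{(j'-1)}_{r(i-1)}(\alpha_k)[P]$. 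The interpolation identity then collapses this to $\delta_{rk}\delta_{i,j'}$, so $\mathcal{L}\cdot V_G(P)=I_n$.

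The main obstacle is the $r=j$ case of the interpolation identity: one must observe that the explicit finite sum in the definition of $L_{jk_j}(x)[P]$ is a Taylor polynomial of precisely the right order so that $P_jT_{jk_j}-1$ vanishes beyond the range of derivatives being probed. The remainder of the proof is only a rebracketing of a binomial sum as a Taylor coefficient. An alternative route would be to apply formula~(\ref{eq: 1.3}) to the monomials $Q(x)=x^{i-1}$ and thereby identify $V_G(P)^T$ as the change-of-basis matrix between $\{x^{i-1}\}$ and $\{L_{jk_j}(x)[P]\}$, from which the inverse can be read off directly; I would nevertheless favor the direct verification above, since it makes the interpolation identity explicit for later use in the paper.
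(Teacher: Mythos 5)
Your verification is correct, but there is nothing in this paper to compare it against: Theorem~\ref{Thm 1.2} is recalled from \cite{Mmous} and stated without proof here. On its own merits your argument is complete. The key point — that the inner sum in \eqref{eq: 1.1} is the order-$(m_j-1-k_j)$ Taylor polynomial $T$ of $g_j=1/P_j$ at $\alpha_j$, so that $P_jT-1$ vanishes to order at least $m_j-k_j$ there and hence $L_{jk_j}(x)[P]=(x-\alpha_j)^{k_j}+O((x-\alpha_j)^{m_j})$ — is exactly what is needed, and the degree bound $\deg L_{jk_j}[P]\leq n-1$ ensures that the sum over $p=1,\ldots,n$ in the product $\mathcal{L}\,V_G(P)$ really does reassemble the full Taylor coefficient $\tfrac{1}{(j'-1)!}L^{(j'-1)}_{r(i-1)}(\alpha_k)[P]$, which your interpolation identity collapses to $\delta_{rk}\delta_{ij'}$. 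The route the source reference (and this paper, in the proof of Theorem~\ref{Thm 2.7}) actually follows is the ``alternative'' you mention: apply Spitzbart's formula~\eqref{eq: 1.3} to the monomials $Q=x^{i-1}$, which exhibits $V_G(P)$ as the change-of-basis matrix between $\{x^{i-1}\}$ and $\{L_{jk_j}(x)[P]\}$, so the stated inverse is read off at once. That route is shorter but delegates the interpolation identity to \cite{As1}; yours is self-contained and makes explicit the duality relation that also underlies the later orthogonality statements for the matrices $B_{jk_j}$. Either argument is acceptable.
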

The following corollary treats the interesting case of the generalized Pascal matrix
\begin{corollary}\label{Prop 1.1}
Let $\alpha$ be a complex number and $n$ be a positive integer. Then the inverse of the Vandermonde matrix
\begin{equation}\label{eq: 1.5}
V_{G}((x-\alpha)^{n}) = \begin{pmatrix}
    1 &  0 &\cdots & 0   \\
    \alpha & 1 & \cdots &  0 \\
     \alpha^{2} &2 \alpha & \cdots & 0 \\
      \alpha^{3} &3 \alpha^{2} & \cdots & 0 \\
     \binom{4}{0} \alpha^{4} &\binom{4}{1} \alpha^{3} & \cdots & 0 \\
    \vdots & \vdots & \cdots & \vdots \\
    \binom{n-1}{0} \alpha^{n-1} & \binom{n-1}{1} \alpha^{n-2}  & \cdots & 1
\end{pmatrix}
\end{equation}
is
\begin{equation*}
 V_{G}^{-1}((x-\alpha)^{n}) = \begin{pmatrix}
    1 &  0 &\cdots & 0   \\
   -\alpha & 1 & \cdots &  0 \\
     \alpha^{2} &-2\alpha & \cdots & 0 \\
     -\alpha^{3} &3\alpha^{2} & \cdots & 0 \\
   \binom{4}{0}(-\alpha)^{4} & \binom{4}{1}(-\alpha)^{3}  & \cdots & 0 \\
    \vdots & \vdots & \cdots & \vdots \\
    \binom{n-1}{0}(-\alpha)^{n-1} & \binom{n-1}{1}(-\alpha)^{n-2}   & \cdots & 1
\end{pmatrix}
\end{equation*}
\end{corollary}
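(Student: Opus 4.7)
The plan is to invoke Theorem~\ref{Thm 1.2} directly with $P(x)=(x-\alpha)^{n}$, i.e.\ $s=1$, $m_{1}=n$, $\alpha_{1}=\alpha$. In this single-block situation the ``other factors'' polynomial is the empty product, so $P_{1}(x)=1$ and hence $g_{1}(x)=(P_{1}(x))^{-1}=1$. All derivatives $g_{1}^{(i)}(\alpha)$ vanish for $i\ge 1$, so only the $i=0$ term survives in~\eqref{eq: 1.1}, yielding the especially clean expression
\begin{equation*}
L_{1k}(x)[P]=(x-\alpha)^{k},\qquad 0\le k\le n-1.
\end{equation*}

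Next I would compute $\frac{1}{(j-1)!}L^{(j-1)}_{1(i-1)}(0)[P]$, which by Theorem~\ref{Thm 1.2} is the $(i,j)$-entry of $V_{G}^{-1}((x-\alpha)^{n})$. Routine differentiation of $(x-\alpha)^{i-1}$ gives $L^{(j-1)}_{1(i-1)}(x)[P]=\frac{(i-1)!}{(i-j)!}(x-\alpha)^{i-j}$ for $j\le i$, and $0$ otherwise. Evaluating at $x=0$ and dividing by $(j-1)!$ produces $\binom{i-1}{j-1}(-\alpha)^{i-j}$ for $i\ge j$ and $0$ above the diagonal, which is exactly the matrix displayed in the statement.

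There is no real obstacle: the only computation is symbolic differentiation of a pure power of $(x-\alpha)$ together with recognition of the binomial coefficient $\frac{(i-1)!}{(i-j)!(j-1)!}$. As an independent sanity check one can verify the semigroup identity $P_{\alpha}P_{\beta}=P_{\alpha+\beta}$ for the generalized Pascal matrices with $(i,j)$-entry $\binom{i-1}{j-1}\alpha^{i-j}$, which follows from $\binom{a}{b}\binom{b}{c}=\binom{a}{c}\binom{a-c}{b-c}$ and the binomial theorem; specializing to $\beta=-\alpha$ immediately reproduces the claimed inverse.
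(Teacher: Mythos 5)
Your proposal is correct and follows exactly the route the paper intends: the corollary is stated as a direct specialization of Theorem~\ref{Thm 1.2} to $s=1$, $m_{1}=n$, where $P_{1}(x)=1$ forces $L_{1k}(x)[P]=(x-\alpha)^{k}$ and the entries $\binom{i-1}{j-1}(-\alpha)^{i-j}$ follow by differentiation. Your added verification via the Pascal semigroup identity $P_{\alpha}P_{\beta}=P_{\alpha+\beta}$ is a valid independent check but not part of the paper's argument.
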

\section{Explicit formulas for the matrix exponential}
\label{sec:2}
Let us consider the following system of differential equations
\begin{equation*}
\dot{X}=AX
\end{equation*}
where $A=(a_{ij})_{1\leq i,j\leq k}$ is a constant matrix with entries in $\mathbb{C}$ and $X(t)$ the vector column defined by $X(t)=(x_{1}(t),x_{2}(t),\ldots,x_{k}(t))^{\mathit{T}}$.\\
Given any square matrix $A$, the exponential matrix function is
\begin{equation*}
\e^{tA}=\sum_{n=0}^{+\infty}\frac{t^{n}}{n!}A^{n}
\end{equation*}
It is well known that the function
\begin{equation*}
X(t)=\e^{tA}X_{0}
\end{equation*}
is the theoretical solution of the equation
\begin{equation*}
\dot{X}=AX, \quad X(0)=X_{0}
\end{equation*}
The last differential equation has gained much importance in linear and dynamical systems.\\
\indent Using the results of the previous section, we develop a purely algebraic approach, that requires only the knowledge of eigenvalues of the matrix, to derive more explicit expressions for the exponential of an arbitrary complex matrix.\\
\indent Let $\chi$ be a unital polynomial of degree $k$
\begin{equation*}
\chi(x)=x^{k}-a_{1}x^{k-1}-a_{2}x^{k-2}-\cdots-a_{k}
\end{equation*}
and consider the following set of differentiable functions mapping $\mathbb{C}$ to the complex matrices of order $k$
\begin{equation*}
F(\chi)=\{f/ f^{(k)}(t)=a_{1}f^{(k-1)}(t)+a_{2}f^{(k-2)}(t)+\cdots+a_{k}f(t)\}
\end{equation*}
where $f^{(l)}(t)$ denote the $l$th derivative of $f(t)$. It is well known that $F(\chi)$ is a $\mathbb{C}$-vector space of dimension $k$.\\
Let $D(\chi)$ be the vector space of all complex functions satisfying the following linear differential equation
\begin{equation*}
y^{(k)}(t)=a_{1}y^{(k-1)}(t)+a_{2}y^{(k-2)}(t)+\cdots+a_{k}y(t)
\end{equation*}
Let $y_{i}, 0\leq i\leq k-1$, be the elements of $D(\chi)$ with the initial conditions
\begin{equation*}
y^{(j)}_{i}(0)=\delta_{ij},\,\,\text{for}\,\, 0\leq j\leq k-1.
\end{equation*}
It is well known that $\lbrace y_{0}(t),y_{1}(t),\ldots,y_{k-1}(t) \rbrace$ is a basis of $D(\chi)$; it is called the canonical basis of $D(\chi)$. The most important property of this basis is that all the elements of $F(\chi)$ can be expressed as the linear combinations of the $y_{i}$'s only in terms of their derivatives at $t=0$. More precisely, each $f\in F(\chi)$ can be written
\begin{equation}\label{eq: sss}
f(t)=\sum_{i=0}^{k-1}y_{i}(t)f^{(i)}(0)
\end{equation}
As a particular case of Formula~\eqref{eq: sss}, we find the following well known result.
\begin{proposition}\label{Thm 2.1}
For any $k\times k$ matrix $A$, the exponential matrix function is given by
\begin{equation*}
\e^{tA}=\sum_{i=0}^{k-1}y_{i}(t)A^{i}
\end{equation*}
\end{proposition}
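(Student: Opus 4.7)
The plan is to recognize the statement as a direct application of formula~\eqref{eq: sss} to the matrix-valued function $f(t)=\e^{tA}$, once $\chi$ is taken to be the characteristic polynomial of $A$. So the work is really just verifying the two hypotheses needed to invoke~\eqref{eq: sss}: membership of $f$ in $F(\chi)$ and computation of the initial derivatives $f^{(i)}(0)$.

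First I would fix $\chi$ to be the characteristic polynomial $\chi(x)=x^k-a_1x^{k-1}-\cdots-a_k$ of $A$. The Cayley--Hamilton theorem then gives $A^k=a_1A^{k-1}+a_2A^{k-2}+\cdots+a_kI$. Next I would use the standard termwise-differentiation property of the matrix exponential series to get $\frac{d^j}{dt^j}\e^{tA}=A^j\e^{tA}$ for every $j\geq 0$, so in particular
\begin{equation*}
\frac{d^k}{dt^k}\e^{tA}=A^k\e^{tA}=(a_1A^{k-1}+\cdots+a_kI)\e^{tA}=a_1\frac{d^{k-1}}{dt^{k-1}}\e^{tA}+\cdots+a_k\e^{tA}.
\end{equation*}
This shows $f(t)=\e^{tA}$ belongs to $F(\chi)$.

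Second, evaluating at $t=0$ gives $f^{(i)}(0)=A^i\e^{0\cdot A}=A^i$ for $0\leq i\leq k-1$. Applying~\eqref{eq: sss} termwise (treating $f$ as a matrix of scalar functions, each coordinate lying in $F(\chi)$ by the argument above) then yields
\begin{equation*}
\e^{tA}=\sum_{i=0}^{k-1}y_i(t)f^{(i)}(0)=\sum_{i=0}^{k-1}y_i(t)A^i,
\end{equation*}
which is the claimed formula.

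There is no real obstacle; the main subtlety is simply that~\eqref{eq: sss} was stated for scalar $f\in F(\chi)$, so one should either argue entrywise or remark that the same identity extends to matrix-valued solutions of the same scalar ODE because the coefficients $y_i(t)$ and the evaluation map $f\mapsto(f^{(i)}(0))_i$ both act linearly. The essential ingredient that makes everything work is Cayley--Hamilton, which forces $\e^{tA}$ into the $k$-dimensional solution space spanned by $y_0,\ldots,y_{k-1}$.
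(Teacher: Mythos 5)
Your proof is correct and follows the paper's own route exactly: the paper also derives the result from formula~\eqref{eq: sss} by observing that $\e^{tA}\in F(\chi_{A})$ and that $\{y_{0},\ldots,y_{k-1}\}$ is the canonical basis of $D(\chi_{A})$. You merely spell out the details (Cayley--Hamilton, termwise differentiation, $f^{(i)}(0)=A^{i}$, and the entrywise extension of~\eqref{eq: sss}) that the paper leaves implicit.
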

\begin{proof} Follows immediately from the fact that $\e^{tA}\in F(\chi_{A})$, where $\chi_{A}$ is the characteristic polynomial of $A$ and $\lbrace y_{0}(t),y_{1}(t),\ldots,y_{k-1}(t) \rbrace$ is the canonical basis of $D(\chi_{A})$.
\end{proof}
In~\cite{Tmapo} the author intents to find, in the simplest way, a method for computing the exponential of a matrix but he does not treat all cases. In what follows, we present the general results without any disadvantages.
We begin with a proposition that will be useful in the sequel.
\begin{proposition}\label{Prop 2.2}
If $\lbrace e_{1}(t),e_{2}(t),\ldots,e_{k}(t) \rbrace $ is a basis of the vector space $D(\chi_{A})$, then there exist unique constant matrices $B_{1},B_{2},\ldots,B_{k}$ such that
\begin{equation*}
\e^{tA}=e_{1}(t)B_{1}+e_{2}(t)B_{2}+\cdots+e_{k}(t)B_{k}
\end{equation*}
and consequently, we have
\begin{equation*}
A^{n}=e_{1}^{(n)}(0)B_{1}+e_{2}^{(n)}(0)B_{2}+\cdots+e_{k}^{(n)}(0)B_{k}
\end{equation*}
for every $n\in \mathbb{N}$.
\end{proposition}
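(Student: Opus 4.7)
The plan is to deduce existence by rewriting the expansion of Proposition~\ref{Thm 2.1} in the new basis, and to obtain uniqueness by reducing to the linear independence of the scalar functions $e_j$ entry by entry. The second formula will then come from differentiating the first identity $n$ times at $t=0$.

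\textbf{Step 1 (existence).} By Proposition~\ref{Thm 2.1}, we already have
\begin{equation*}
\e^{tA}=\sum_{i=0}^{k-1}y_{i}(t)A^{i},
\end{equation*}
where $\{y_0,\dots,y_{k-1}\}$ is the canonical basis of $D(\chi_A)$. Since $\{e_1,\dots,e_k\}$ is another basis of the same $k$-dimensional space, there exist unique scalars $c_{ij}\in\mathbb{C}$ with $y_i(t)=\sum_{j=1}^{k}c_{ij}e_j(t)$. Substituting and reversing the order of summation gives $\e^{tA}=\sum_{j=1}^{k}e_j(t)B_j$ where $B_j:=\sum_{i=0}^{k-1}c_{ij}A^{i}$ is a constant $k\times k$ matrix, so existence is immediate.

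\textbf{Step 2 (uniqueness).} Suppose $\sum_{j=1}^{k}e_j(t)B_j=\sum_{j=1}^{k}e_j(t)B_j'$, i.e.\ $\sum_{j=1}^{k}e_j(t)(B_j-B_j')=0$. Fix any entry position $(p,q)$ and read off the $(p,q)$ component: $\sum_{j=1}^{k}(B_j-B_j')_{pq}\,e_j(t)=0$. Because the $e_j$ are linearly independent as scalar functions, every coefficient $(B_j-B_j')_{pq}$ vanishes, and since $p,q$ were arbitrary we conclude $B_j=B_j'$ for all $j$.

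\textbf{Step 3 (the formula for $A^n$).} Both sides of $\e^{tA}=\sum_{j=1}^{k}e_j(t)B_j$ are entire matrix-valued functions of $t$, so we may differentiate $n$ times termwise and set $t=0$. Using $\dfrac{d^n}{dt^n}\e^{tA}\Big|_{t=0}=A^{n}$ gives
\begin{equation*}
A^{n}=\sum_{j=1}^{k}e_j^{(n)}(0)\,B_j,
\end{equation*}
as required.

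There is no real obstacle here: the only point demanding care is that the $B_j$ are matrices, so linear independence of the $e_j$ cannot be invoked directly on a matrix equation; this is why Step~2 must be done entrywise. Everything else is a direct consequence of Proposition~\ref{Thm 2.1} together with the change-of-basis relation between $\{y_i\}$ and $\{e_j\}$.
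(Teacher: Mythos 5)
Your proof is correct and follows the same route the paper intends: existence via the change of basis from the canonical basis $\{y_{i}\}$ of Proposition~\ref{Thm 2.1} to $\{e_{j}\}$, uniqueness from linear independence of the $e_{j}$ (correctly applied entrywise), and the $A^{n}$ formula by differentiating at $t=0$. The paper's own proof is a one-line appeal to exactly these facts, so you have simply supplied the details it leaves implicit.
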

\begin{proof}
This result follows directly from Proposition~\ref{Thm 2.1} and the fact that $\lbrace e_{1}(t),e_{2}(t),\ldots,e_{k}(t) \rbrace$ is a basis of the vector space $D(\chi_{A})$.
\end{proof}
In the following, we provide the explicit expressions of the elements $y_{0}(t),y_1(t),\ldots,y_{k-1}(t)$ of $D(\chi)$ in terms of the roots of $\chi$.
\begin{theorem}\label{Thm 2.4}
Let $\chi(x)=(x-\alpha_{1})^{m_{1}}(x-\alpha_{2})^{m_{2}}\cdots(x-\alpha_{s})^{m_{s}}$ be a unital polynomial of degree $k$. Then the (i+1)th element of the canonical basis of $D(\chi)$ is
\begin{equation*}
y_{i}(t)=\sum_{p=1}^{s}\frac{\e^{\alpha_{p}t}}{i!}(\sum_{r_{p}=0}^{m_{p}-1}\frac{t^{r_{p}}}{r_{p}!}L_{pr_{p}}^{(i)}(0)[\chi]), 0\leq i\leq k-1
\end{equation*}
\end{theorem}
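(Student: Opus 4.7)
The plan is to express $y_i(t)$ in the standard basis of $D(\chi)$ consisting of the functions
\[
f_{p,r}(t) = \frac{t^{r}}{r!}\, \e^{\alpha_{p} t}, \qquad 1\le p\le s,\ 0\le r\le m_{p}-1,
\]
and then identify the change-of-coordinates matrix as the confluent Vandermonde matrix $V_{G}(\chi)$, so that Theorem~\ref{Thm 1.2} delivers the claimed coefficients.

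First I would write
\[
y_{i}(t) = \sum_{p=1}^{s}\sum_{r=0}^{m_{p}-1} c_{p,r}^{(i)}\, f_{p,r}(t)
\]
for unknown scalars $c_{p,r}^{(i)}$. A direct Leibniz computation shows that
\[
f_{p,r}^{(j)}(0) = \binom{j}{r}\alpha_{p}^{\,j-r} \quad (j\ge r), \qquad f_{p,r}^{(j)}(0)=0 \quad (j<r),
\]
which is exactly the $(j{+}1,r{+}1)$ entry of the block $V_{p}$ in the definition of the confluent Vandermonde matrix \eqref{eq: 1.4}. Hence, imposing the defining conditions $y_{i}^{(j)}(0)=\delta_{ij}$ for $0\le j\le k-1$ yields the linear system
\[
V_{G}(\chi)\,\mathbf{c}^{(i)} = \mathbf{e}_{i+1},
\]
where $\mathbf{c}^{(i)}$ is the column vector of the unknowns (ordered as $(p,r)$ lexicographically) and $\mathbf{e}_{i+1}$ is the $(i{+}1)$th standard basis vector of $\mathbb{C}^{k}$.

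Next I would invoke Theorem~\ref{Thm 1.2}: the coefficient vector $\mathbf{c}^{(i)}$ is precisely the $(i{+}1)$th column of $V_{G}^{-1}(\chi)$. Reading off this column through the explicit block expression of $V_{G}^{-1}(\chi)$, the entry sitting in block $\mathcal{L}_{p m_{p}}$ at internal row $r+1$ and column $i+1$ equals
\[
c_{p,r}^{(i)} = \frac{1}{i!}\, L_{pr}^{(i)}(0)[\chi].
\]
Substituting this into the expansion of $y_{i}(t)$ and factoring $\e^{\alpha_{p}t}/i!$ out of the inner sum produces exactly the announced formula.

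The only subtlety, and the main step to check carefully, is the identification of the matrix of derivatives $\bigl(f_{p,r}^{(j)}(0)\bigr)$ with the confluent Vandermonde matrix $V_{G}(\chi)$; everything else is a transcription of Theorem~\ref{Thm 1.2}. Once that matching of indices (derivative order $j$ with row index $j+1$, and multiplicity index $r$ with the column index $r+1$ inside the block $V_{p}$) is verified, the rest of the proof reduces to a straightforward substitution.
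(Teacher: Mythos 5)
Your proposal is correct and follows essentially the same route as the paper: both express $y_i$ in terms of the basis $\{\frac{t^r}{r!}\e^{\alpha_p t}\}$ of $D(\chi)$, identify the relevant change-of-basis matrix (via the derivatives $f_{p,r}^{(j)}(0)=\binom{j}{r}\alpha_p^{j-r}$) with the confluent Vandermonde matrix $V_G(\chi)$, and read the coefficients off the explicit inverse in Theorem~\ref{Thm 1.2}. The index verification you flag as the "only subtlety" is indeed the whole content of the argument, and you have carried it out correctly.
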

\begin{proof}
It is well known that
$$B=\lbrace \e^{\alpha_{1}t},t\e^{\alpha_{1}t},\ldots,\frac{t^{m_{1}-1}}{(m_{1}-1)!}\e^{\alpha_{1}t},\ldots,\e^{\alpha_{s}t},t\e^{\alpha_{s}t},\ldots, \frac{t^{m_{s}-1}}{(m_{s}-1)!}\e^{\alpha_{s}t} \rbrace $$
is a basis of $D(\chi)$. By expressing each member of this basis in terms of the canonical basis, we get
\begin{equation*}
\frac{t^{i}}{i!}\e^{\alpha_{p}t}=\sum_{j=0}^{k-1}(\frac{t^{i}}{i!}\e^{\alpha_{p}t})^{(j)}(0)y_{j}(t)
\end{equation*}
for $0\leq i\leq m_{p}-1$ and $p=1,2,\ldots,s$. The resulting change of basis matrix from $B$ to the canonical basis is the confluent Vandermonde matrix~\eqref{eq: 1.4}
$$
V_{G}(\chi)=\begin{pmatrix}
    1 &  0 &\cdots & 0 & \cdots & 1 & 0 &\cdots&0  \\
    \alpha_{1} & 1 & \cdots &  0 & \cdots &  \alpha_{s}& 1 &\cdots& 0  \\
     \alpha_{1}^{2} & 2\alpha_{1} & \cdots & 0& \cdots &  \alpha_{s}^{2}& 2\alpha_{s} & \cdots& 0  \\
      \alpha_{1}^{3} &3\alpha_{1}^{2} & \cdots & 0 & \cdots &  \alpha_{s}^{3}& 3\alpha_{s}^{2} & \cdots&0  \\
    \vdots & \vdots & \cdots & \vdots & \cdots & \vdots & \vdots & \cdots & \vdots\\
     \alpha_{1}^{k-1} & (k-1)\alpha_{1}^{k-2}   & \cdots & 1 &\cdots& \alpha_{s}^{k-1}& (k-1)\alpha_{s}^{k-2} &\cdots& 1
\end{pmatrix}
$$
Using the inverse given by~\eqref{eq: 1.7}, we obtain
$$y_{i}(t)=\sum_{p=1}^{s}\frac{\e^{\alpha_{p}t}}{i!}(\sum_{r_{p}=0}^{m_{p}-1}\frac{t^{r_{p}}}{r_{p}!}L_{pr_{p}}^{(i)}(0)[\chi]), 0\leq i\leq k-1 $$
as desired.
\end{proof}
As a particular case we have the following corollary.
\begin{corollary}\label{Thm 2.3}
If $\chi(x)=(x-\alpha_{1})(x-\alpha_{2})\cdots(x-\alpha_{k})$ has distinct roots $\alpha_{1},\alpha_{2},\ldots,\alpha_{k}$, then
\begin{equation*}
y_{i}(t)=\sum_{j=1}^{k}\frac{\e^{\alpha_{j}t}}{i!}L_{j0}^{(i)}(0)[\chi], 0\leq i\leq k-1
\end{equation*}
\end{corollary}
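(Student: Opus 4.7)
The statement is a direct specialization of Theorem~\ref{Thm 2.4} to the case where every root is simple, i.e.\ $s=k$ and $m_{1}=m_{2}=\cdots=m_{k}=1$. So the plan is simply to invoke Theorem~\ref{Thm 2.4} and check that the inner sum collapses.

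More precisely, with each $m_{p}=1$, the index $r_{p}$ in the formula
\begin{equation*}
y_{i}(t)=\sum_{p=1}^{s}\frac{\e^{\alpha_{p}t}}{i!}\Bigl(\sum_{r_{p}=0}^{m_{p}-1}\frac{t^{r_{p}}}{r_{p}!}L_{pr_{p}}^{(i)}(0)[\chi]\Bigr)
\end{equation*}
ranges only over $r_{p}=0$, so the parenthesized sum reduces to the single term $\tfrac{t^{0}}{0!}L_{p0}^{(i)}(0)[\chi]=L_{p0}^{(i)}(0)[\chi]$. Substituting and renaming the summation index $p\to j$ yields exactly the claimed formula. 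There is no real obstacle here; one only has to verify the degenerate-case bookkeeping of the indices.

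If one prefers a self-contained argument not relying on Theorem~\ref{Thm 2.4}, the same reasoning used in its proof applies verbatim and is actually easier. Namely, $B=\{\e^{\alpha_{1}t},\ldots,\e^{\alpha_{k}t}\}$ is a basis of $D(\chi)$ since the $\alpha_{j}$ are distinct, so each $y_{i}(t)$ admits a unique expansion $y_{i}(t)=\sum_{j=1}^{k}c_{ij}\e^{\alpha_{j}t}$. The change-of-basis matrix from $B$ to the canonical basis $\{y_{0},\ldots,y_{k-1}\}$ is the (non-confluent) Vandermonde matrix $V_{G}(\chi)$ corresponding to the simple factorization, and by Theorem~\ref{Thm 1.2} its inverse has $(j,\ell)$-entry equal to $\tfrac{1}{(\ell-1)!}L_{j0}^{(\ell-1)}(0)[\chi]$ (all blocks $\mathcal{L}_{rm_{r}}$ reduce to single rows since each $m_{r}=1$). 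Reading off the coefficients $c_{ij}=\tfrac{1}{i!}L_{j0}^{(i)}(0)[\chi]$ from this inverse gives the formula.

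Either way, the corollary is a routine specialization; the only mildly delicate point is matching the indexing conventions of Theorem~\ref{Thm 1.2} (rows labelled by the derivative order $i-1$, columns by $j-1$) against those of Theorem~\ref{Thm 2.4} (where the $\tfrac{1}{i!}$ comes from differentiating $\tfrac{t^{i}}{i!}\e^{\alpha_{p}t}$ and evaluating at $t=0$). Once this bookkeeping is done, no further computation is required.
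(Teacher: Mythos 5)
Your proposal is correct and matches the paper exactly: the paper states this corollary as an immediate particular case of Theorem~\ref{Thm 2.4} with $s=k$ and all $m_{p}=1$, which is precisely your first argument (the inner sum collapsing to the single term $r_{p}=0$). Your alternative self-contained derivation via the non-confluent Vandermonde inverse is a fine bonus but not needed.
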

In the following we state one of our main results.
\begin{theorem}\label{Thm 22.4}
Let $A$ be a $k\times k$ matrix, and let $\chi_{A}(x)=(x-\alpha_{1})^{m_{1}}(x-\alpha_{2})^{m_{2}}\cdots(x-\alpha_{s})^{m_{s}}$ be its characteristic polynomial. Then
\begin{equation*}
\e^{tA}=\sum_{i=0}^{k-1}[\sum_{p=1}^{s}\frac{\e^{\alpha_{p}t}}{i!}(\sum_{r_{p}=0}^{m_{p}-1}\frac{t^{r_{p}}}{r_{p}!}L_{pr_{p}}^{(i)}(0)[\chi_{A}])]A^{i}
\end{equation*}
\end{theorem}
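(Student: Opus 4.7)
The plan is to observe that Theorem~\ref{Thm 22.4} is essentially a direct composition of the two statements already proved in this section: Proposition~\ref{Thm 2.1}, which expresses $\e^{tA}$ in the canonical basis of $D(\chi_A)$, and Theorem~\ref{Thm 2.4}, which gives the closed form of that basis in terms of the roots of a polynomial. So the strategy is simply to write $\chi=\chi_A$, apply Theorem~\ref{Thm 2.4} to obtain each $y_i(t)$ explicitly, and substitute into Proposition~\ref{Thm 2.1}.

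More concretely, I would proceed as follows. First, invoke Proposition~\ref{Thm 2.1} to write
\begin{equation*}
\e^{tA}=\sum_{i=0}^{k-1}y_{i}(t)A^{i},
\end{equation*}
where $\{y_0(t),\ldots,y_{k-1}(t)\}$ is the canonical basis of $D(\chi_A)$ associated with the characteristic polynomial $\chi_A(x)=(x-\alpha_{1})^{m_{1}}\cdots(x-\alpha_{s})^{m_{s}}$. Next, apply Theorem~\ref{Thm 2.4} with $\chi=\chi_A$ to expand each canonical element as
\begin{equation*}
y_{i}(t)=\sum_{p=1}^{s}\frac{\e^{\alpha_{p}t}}{i!}\Bigl(\sum_{r_{p}=0}^{m_{p}-1}\frac{t^{r_{p}}}{r_{p}!}L_{pr_{p}}^{(i)}(0)[\chi_{A}]\Bigr),\qquad 0\leq i\leq k-1.
\end{equation*}
Substituting this into the previous identity and keeping the outer sum over $i$ outside the brackets yields precisely the stated formula.

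Because both ingredients are already available, there is no real obstacle: the only thing to verify is that the hypothesis of Theorem~\ref{Thm 2.4} is met, i.e.\ that $\chi_A$ has the required factored form with distinct $\alpha_p$'s, which is guaranteed by the definition of the characteristic polynomial over $\mathbb{C}$, and that $\e^{tA}\in F(\chi_A)$ by the Cayley--Hamilton theorem (which underlies Proposition~\ref{Thm 2.1}). If there is any subtlety worth flagging, it is simply bookkeeping: checking that the order of summation in $i$, $p$, and $r_p$ is the one displayed in the theorem and that the coefficients $\frac{1}{i!}L_{pr_p}^{(i)}(0)[\chi_A]$ match the inverse Vandermonde entries used in the proof of Theorem~\ref{Thm 2.4}. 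This is mechanical, so the proof will be a short composition of already-proved results rather than a new calculation.
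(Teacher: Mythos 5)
Your proposal is correct and follows exactly the paper's own argument: the paper proves this theorem by declaring it an immediate consequence of Proposition~\ref{Thm 2.1} and Theorem~\ref{Thm 2.4}, which is precisely the substitution you carry out. Your version merely writes out the composition explicitly, which is a fair expansion of the same one-line proof.
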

\begin{proof}
The result is an immediate consequence of Theorem~\ref{Thm 2.4} and Proposition~\ref{Thm 2.1}.
\end{proof}
As a particular case we have the following corollary.
\begin{corollary}\label{Thm 2.3}
If $\chi_{A}(x)=(x-\alpha_{1})(x-\alpha_{2})\cdots(x-\alpha_{k})$ has distinct roots $\alpha_{1},\alpha_{2},\ldots,\alpha_{k}$, then
\begin{equation*}
\e^{tA}=\sum_{i=0}^{k-1}(\sum_{j=1}^{k}\frac{\e^{\alpha_{j}t}}{i!}L_{j0}^{(i)}(0)[\chi_{A}])A^{i}
\end{equation*}
\end{corollary}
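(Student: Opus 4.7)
The plan is to recognize the statement as a direct specialization of Theorem~\ref{Thm 22.4} to the case of simple eigenvalues. When $\chi_A(x)=(x-\alpha_1)(x-\alpha_2)\cdots(x-\alpha_k)$ factors into $k$ distinct linear factors, in the notation of Theorem~\ref{Thm 22.4} we have $s=k$ and $m_p=1$ for every $p\in\{1,\ldots,k\}$. This is the only input needed.

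First I would unwind the inner sum in the formula of Theorem~\ref{Thm 22.4}. With $m_p-1=0$, the index $r_p$ takes only the value $0$, so that sum collapses to the single term $\frac{t^{0}}{0!}L_{p0}^{(i)}(0)[\chi_A]=L_{p0}^{(i)}(0)[\chi_A]$. Substituting this back into the formula of Theorem~\ref{Thm 22.4} and relabeling the summation index $p$ as $j$ produces exactly the expression asserted by the corollary, so no further computation is required.

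Since this is a routine specialization, I do not anticipate any real obstacle. The only sanity check worth carrying out is to verify that $L_{j0}(x)[\chi_A]$ in this simple-root regime reduces to the classical $j$th Lagrange basis polynomial. From the defining formula~\eqref{eq: 1.1} with $k_j=0$ and $m_j=1$, the inner sum again contains only the term $i=0$, giving $L_{j0}(x)[\chi_A]=P_j(x)\,g_j(\alpha_j)=P_j(x)/P_j(\alpha_j)=\prod_{i\neq j}(x-\alpha_i)/\prod_{i\neq j}(\alpha_j-\alpha_i)$. This matches the well-known closed-form for $\e^{tA}$ in the diagonalizable case with distinct spectrum, providing reassurance that the general formula of Theorem~\ref{Thm 22.4} has been specialized correctly.
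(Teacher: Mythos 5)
Your proposal is correct and matches the paper's argument: the paper likewise obtains this corollary as an immediate specialization of Theorem~\ref{Thm 22.4} (together with Theorem~\ref{Thm 2.4}) to the case $s=k$, $m_{p}=1$, where the inner sum over $r_{p}$ collapses to the single term $L_{p0}^{(i)}(0)[\chi_{A}]$. Your additional check that $L_{j0}(x)[\chi_{A}]$ reduces to the classical Lagrange basis polynomial is a harmless bonus not present in the paper.
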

\begin{proof}
Follows immediately from Theorem~\ref{Thm 2.4} and Theorem~\ref{Thm 22.4}.
\end{proof}
In the case where the matrix $A$ has a single eigenvalue, we have the following result.
\begin{corollary}\label{Thm 2.5}
If $\chi_{A}(x)=(x-\alpha)^{k}$ has a single root $\alpha$, then
\begin{equation*}
y_{i}(t)=\frac{\e^{\alpha t}}{i!}(\sum_{l=i}^{k-1}\frac{(-1)^{l+i}}{(l-i)!}\alpha^{l-i}t^{l}), 0\leq i\leq k-1
\end{equation*}
and
\begin{equation*}
\e^{tA}=\sum_{i=0}^{k-1}\frac{1}{i!}(\sum_{l=i}^{k-1}\frac{(-1)^{l+i}}{(l-i)!}\alpha^{l-i}t^{l})A^{i}\e^{\alpha t}
\end{equation*}
\end{corollary}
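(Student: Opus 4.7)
The plan is to specialize Theorem~\ref{Thm 2.4} to the single-eigenvalue case $s=1$, $\alpha_{1}=\alpha$, $m_{1}=k$, and then feed the resulting expression for $y_{i}(t)$ into Proposition~\ref{Thm 2.1} (equivalently, Theorem~\ref{Thm 22.4}) to obtain the formula for $\e^{tA}$.

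First I would unpack the polynomial $L_{jk_{j}}(x)[\chi]$ defined in~\eqref{eq: 1.1} under the assumption $s=1$. In that case the product defining $P_{1}(x)$ is empty, so $P_{1}(x)=1$ and therefore $g_{1}(x)=1$. Consequently $g_{1}^{(i)}(\alpha)=0$ for every $i\geq 1$, and only the $i=0$ term survives in the inner sum of~\eqref{eq: 1.1}. This collapses the definition to the very simple form
\begin{equation*}
L_{1r_{1}}(x)[\chi]=(x-\alpha)^{r_{1}},\qquad 0\leq r_{1}\leq k-1.
\end{equation*}

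Next I would differentiate this $i$ times: for $i\leq r_{1}$,
\begin{equation*}
L_{1r_{1}}^{(i)}(x)[\chi]=\frac{r_{1}!}{(r_{1}-i)!}(x-\alpha)^{r_{1}-i},
\end{equation*}
and the derivative vanishes for $i>r_{1}$. Evaluating at $x=0$ gives $L_{1r_{1}}^{(i)}(0)[\chi]=\frac{r_{1}!}{(r_{1}-i)!}(-\alpha)^{r_{1}-i}$ when $i\leq r_{1}$, and $0$ otherwise. Substituting this into the formula of Theorem~\ref{Thm 2.4}, the sum over $r_{1}$ effectively begins at $r_{1}=i$, the factorial $r_{1}!$ cancels the $\tfrac{1}{r_{1}!}$ coming from the basis, and renaming $r_{1}=l$ together with the identity $(-1)^{l-i}=(-1)^{l+i}$ yields exactly
\begin{equation*}
y_{i}(t)=\frac{\e^{\alpha t}}{i!}\sum_{l=i}^{k-1}\frac{(-1)^{l+i}}{(l-i)!}\alpha^{l-i}t^{l}.
\end{equation*}

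Finally, the second identity follows by invoking Proposition~\ref{Thm 2.1} (or directly Theorem~\ref{Thm 22.4}) and substituting this closed form for $y_{i}(t)$ into $\e^{tA}=\sum_{i=0}^{k-1}y_{i}(t)A^{i}$, with $\e^{\alpha t}$ factored out of the sum. I do not expect any genuine obstacle here: the whole content is the algebraic collapse $P_{1}\equiv 1\Rightarrow g_{1}\equiv 1$, which reduces the Hermite-type interpolation polynomial $L_{1r_{1}}(x)[\chi]$ to a pure power. The only place where care is needed is bookkeeping the range of the derivatives (vanishing for $i>r_{1}$, so the summation index starts at $l=i$) and verifying the sign $(-1)^{l-i}=(-1)^{l+i}$ used in the statement.
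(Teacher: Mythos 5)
Your proposal is correct and follows exactly the route the paper takes: the paper's proof is simply the one-line remark that the corollary is an immediate consequence of Theorem~\ref{Thm 2.4} and Theorem~\ref{Thm 22.4}, and your computation (the collapse $P_{1}\equiv 1$, $g_{1}\equiv 1$, hence $L_{1r_{1}}(x)[\chi]=(x-\alpha)^{r_{1}}$, followed by differentiation and evaluation at $0$) is precisely the specialization being invoked. You have merely supplied the details the authors left implicit, and they check out.
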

\begin{proof}
The result is an immediate consequence of Theorem~\ref{Thm 2.4} and Theorem~\ref{Thm 22.4}.
\end{proof}
For illustration purposes, we consider the case of a square matrix of order $3$.
\begin{example}
Let $A$ be a square matrix of order $3$ with characteristic polynomial
\begin{equation*}
\chi_{A}(x)=x^{3}-a_{1}x^{2}-a_{2}x-a_{3}
\end{equation*}
\begin{enumerate}[1.]
\item If $\alpha$ is a root of multiplicity $3$ of $\chi_{A}(x)$ then, using Theorem~\ref{Thm 2.5}, we have
$$\e^{tA}=(\e^{\alpha t}\sum_{l=0}^{2}\frac{(-1)^{l}}{l!}\alpha^{l}t^{l})I+(\e^{\alpha t}\sum_{l=1}^{2}\frac{(-1)^{l+1}}{(l-1)!}  \alpha^{l-1}t^{l})A+(\frac{\e^{\alpha t}}{2!}\sum_{l=2}^{2}\frac{(-1)^{l}}{(l-2)!}\alpha^{l-2}t^{l})A^{2}
$$
Consequently, we obtain
\begin{equation*}
\e^{tA}=(1-\alpha t+\frac{1}{2}\alpha^{2}t^{2})\e^{\alpha t}I+(t-\alpha t^{2})\e^{\alpha t}A+\frac{1}{2}t^{2}\e^{ \alpha t}A^{2}
\end{equation*}
\item If $\alpha_{1}$ is a root of multiplicity $2$, and $\alpha_{2}$ is a simple root, then Theorem~\ref{Thm 22.4} gives
\begin{align*}
     \e^{tA}&=\\
     &(\e^{\alpha_{1}t}(L_{10}(0)[\chi_{A}])+tL_{11}(0)[\chi_{A}])+\e^{\alpha_{2}t}L_{20}(0)[\chi_{A}])I+\\
    &\hspace*{1cm}(\e^{\alpha_{1}t}(L_{10}^{(1)}(0)[\chi_{A}])+tL_{11}^{(1)}(0)[\chi_{A}])+\e^{\alpha_{2}t}L_{20}^{(1)}(0)[\chi_{A}])A+\\
    &\hspace*{2cm}\frac{1}{2!} (\e^{\alpha_{1}t}(L_{10}^{(2)}(0)[\chi_{A}])+tL_{11}^{(2)}(0)[\chi_{A}])+\e^{\alpha_{2}t}L_{20}^{(2)}(0)[\chi_{A}])A^{2}
    \end{align*}
\begin{equation*}
\end{equation*}
Formula~\eqref{eq: 1.1} yields
$$ \left \{
\begin{array}{rcl}
L_{10}(x)[\chi_{A}]&=&\dfrac{-x^{2}+2\alpha_{1}x-2\alpha_{1}\alpha_{2}+\alpha_{2}^{2}}{(\alpha_{1}-\alpha_{2})^{2}}\vspace*{0.5pc}\\
L_{11}(x)[\chi_{A}]&=&\dfrac{x^{2}-(\alpha_{1}+\alpha_{2})x+\alpha_{1}\alpha_{2}}{(\alpha_{1}-\alpha_{2})}\vspace*{0.5pc}\\
L_{20}(x)[\chi_{A}]&=&\dfrac{x^{2}-2\alpha_{1}x+\alpha_{1}^{2}}{(\alpha_{2}-\alpha_{1})^{2}}\\
\end{array}
\right.
$$
A simple calculation gives
\begin{align*}
     \e^{tA}=&\frac{1}{(\alpha_{1}-\alpha_{2})^2}\big[((\alpha_{2}^2-2\alpha_{1}\alpha_{2})\e^{\alpha_{1}t}+(\alpha_{1}^{2}\alpha_{2}- \alpha_{1}\alpha_{2}^2)t\e^{\alpha_{1}t}+\alpha_{1}^2\e^{\alpha_{2}t})I+\\
    &((\alpha_{1}^{2}-\alpha_{2}^2)t\e^{\alpha_{1} t}+2\alpha_{1}(\e^{\alpha_{1}t}-\e^{\alpha_{2}t}))A+
    (\e^{\alpha_{2}t}-\e^{\alpha_{1} t}+(\alpha_{1}-\alpha_{2})t\e^{\alpha_{1} t})A^{2}\big]\\
    \end{align*}
\item If $\alpha_{1},\alpha_{2},\alpha_{3}$ are simple roots of $\chi_{A}(x)$ then, using Corollary~\ref{Thm 2.3}, we have
$$
\e^{tA}=(\sum_{j=1}^{3}\e^{\alpha_{j}t}L_{j0}(0)[\chi_{A}]))I+(\sum_{j=1}^{3}\e^{\alpha_{j}t}L_{j0}^{(1)}(0)[\chi_{A}]))A+
(\sum_{j=1}^{3}\frac{ \e^{\alpha_{j}t}}{2!}L_{j0}^{(2)}(0)[\chi_{A}]))A^{2}
$$
Formula~\eqref{eq: 1.1} gives
$$ \left \{
\begin{array}{rcl}
L_{10}(x)[\chi_{A}])&=&\dfrac{(x-\alpha_{2})(x-\alpha_{3})}{(\alpha_{1}-\alpha_{2})(\alpha_{1}-\alpha_{3})}=\dfrac{x^{2}-(\alpha_{2}+\alpha_{3})x+
\alpha_{2}\alpha_{3}}{(\alpha_{1}-\alpha_{2})(\alpha_{1}-\alpha_{3})}\vspace*{0.5pc}\\
L_{20}(x)[\chi_{A}])&=&\dfrac{(x-\alpha_{1})(x-\alpha_{3})}{(\alpha_{2}-\alpha_{1})(\alpha_{2}-\alpha_{3})}=\dfrac{x^{2}-(\alpha_{1}+\alpha_{3})x+
\alpha_{1}\alpha_{3}}{(\alpha_{2}-\alpha_{1})(\alpha_{2}-\alpha_{3})}\vspace*{0.5pc}\\
L_{30}(x)[\chi_{A}])&=&\dfrac{(x-\alpha_{1})(x-\alpha_{2})}{(\alpha_{3}-\alpha_{1})(\alpha_{3}-\alpha_{2})}=\dfrac{x^{2}-(\alpha_{1}+\alpha_{2})x+
\alpha_{1}\alpha_{2}}{(\alpha_{3}-\alpha_{1})(\alpha_{3}-\alpha_{2})}
\end{array}
\right.
$$
Consequently, the matrix exponential in this case is given by
\begin{align*}
     &\e^{tA}=\\
     &\left(\frac{\alpha_{2}\alpha_{3}}{(\alpha_{1}-\alpha_{2})(\alpha_{1}-\alpha_{3})}\e^{\alpha_{1} t}+\frac{\alpha_{1}\alpha_{3}}{(\alpha_{2}-\alpha_{1})(\alpha_{2}-\alpha_{3})}\e^{\alpha_{2}t}+
     \frac{\alpha_{1}\alpha_{2}}{(\alpha_{3}-\alpha_{1})(\alpha_{3}-\alpha_{2})}\e^{\alpha_{3}t}\right)I\\
    &+\left(\frac{\alpha_{2}+\alpha_{3}}{(\alpha_{3}-\alpha_{1})(\alpha_{1}-\alpha_{2})}\e^{\alpha_{1} t}+\frac{\alpha_{1}+\alpha_{3}}{(\alpha_{3}-\alpha_{2})(\alpha_{2}-\alpha_{1})}\e^{\alpha_{2}t}+
    \frac{\alpha_{1}+\alpha_{2}}{(\alpha_{1}-\alpha_{3})(\alpha_{3}-\alpha_{2})}\e^{\alpha_{3}t}\right)A\\
    &\hspace*{1cm}+\left(\frac{\e^{\alpha_{1} t}}{(\alpha_{1}-\alpha_{2})(\alpha_{1}-\alpha_{3})}+\frac{\e^{\alpha_{2}t}}{(\alpha_{2}-\alpha_{1})(\alpha_{2}-\alpha_{3})}+
    \frac{\e^{\alpha_{3}t}}{(\alpha_{3}-\alpha_{1})(\alpha_{3}-\alpha_{2})}\right)A^{2}
    \end{align*}
\end{enumerate}
\end{example}
In the present approach, one gains a basic construction of the so-called spectral decomposition of $A$. The approach makes the determination of the spectral decomposition of any square matrix more practical than the usual method of partial fraction decomposition. Using Lagrange polynomials W. A. Harris et al.~\cite{Harri} have derived the spectral decomposition of a matrix with simple eigenvalues. Here, we generalize this result to any matrix using a generalization of Hermite's formula given by A.~Spitzbart~\cite{As1}.
\begin{theorem}\label{Thm 2.7}
Let $A$ be a $k\times k$ matrix, and let $\chi_{A}(x)=(x-\alpha_{1})^{m_{1}}(x-\alpha_{2})^{m_{2}}\cdots(x-\alpha_{s})^{m_{s}}$ be its characteristic polynomial. Then
\begin{equation}\label{eq: 2.20}
\e^{tA}=\sum_{j=1}^{s}\sum_{k_{j}=0}^{m_{j}-1}\frac{t^{k_{j}}}{k_{j}!}\e^{\alpha_{j}t}B_{jk_{j}}
\end{equation}
where $B_{jk_{j}}=L_{jk_{j}}(A)[\chi_{A}]$. Moreover, for $i\neq j$ we have
\begin{equation}\label{eq: 2.21}
B_{ik_{i}}B_{jk_{j}}=B_{jk_{j}}B_{ik_{i}}=0
\end{equation}
\end{theorem}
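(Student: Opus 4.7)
The plan is to derive the formula by reindexing Theorem~\ref{Thm 22.4} and to obtain the orthogonality from Cayley--Hamilton applied to a polynomial identity.

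First, I would start from the expression
\begin{equation*}
\e^{tA}=\sum_{i=0}^{k-1}\left[\sum_{p=1}^{s}\frac{\e^{\alpha_{p}t}}{i!}\sum_{r_{p}=0}^{m_{p}-1}\frac{t^{r_{p}}}{r_{p}!}L_{pr_{p}}^{(i)}(0)[\chi_{A}]\right]A^{i}
\end{equation*}
supplied by Theorem~\ref{Thm 22.4} and swap the order of summation, pulling the time-dependent factor $\frac{t^{r_{p}}}{r_{p}!}\e^{\alpha_{p}t}$ to the outside. The remaining bracket is
\begin{equation*}
\sum_{i=0}^{k-1}\frac{1}{i!}L_{pr_{p}}^{(i)}(0)[\chi_{A}]\,A^{i}.
\end{equation*}
A quick degree count from the definition~\eqref{eq: 1.1} shows that $L_{pr_{p}}(x)[\chi_{A}]$ has degree at most $(k-m_{p})+r_{p}+(m_{p}-1-r_{p})=k-1$, so its Taylor expansion about $0$ is exactly the polynomial itself truncated at the $x^{k-1}$ term. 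Consequently the bracket equals $L_{pr_{p}}(A)[\chi_{A}]=B_{pr_{p}}$, and after relabeling $(p,r_{p})\mapsto(j,k_{j})$ this yields~\eqref{eq: 2.20}.

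Next, for the orthogonality~\eqref{eq: 2.21}, I would argue at the level of polynomials. By~\eqref{eq: 1.1}, each $L_{pr_{p}}(x)[\chi_{A}]$ carries the factor $P_{p}(x)=\chi_{A}(x)/(x-\alpha_{p})^{m_{p}}$. For $i\neq j$ the product $P_{i}(x)P_{j}(x)$ picks up $(x-\alpha_{i})^{m_{i}}$ from the second factor and $(x-\alpha_{j})^{m_{j}}$ from the first, so it equals $\chi_{A}(x)\prod_{l\neq i,j}(x-\alpha_{l})^{m_{l}}$, a polynomial multiple of $\chi_{A}(x)$. Hence $L_{ik_{i}}(x)[\chi_{A}]\,L_{jk_{j}}(x)[\chi_{A}]$ is divisible by $\chi_{A}(x)$, and Cayley--Hamilton immediately gives $B_{ik_{i}}B_{jk_{j}}=L_{ik_{i}}(A)[\chi_{A}]\,L_{jk_{j}}(A)[\chi_{A}]=0$. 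The reversed product vanishes for the same reason, or simply because polynomials in a single matrix commute.

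The only subtle point I anticipate is the degree bookkeeping that justifies identifying the truncated Taylor sum with $L_{pr_{p}}(A)[\chi_{A}]$; once this is settled, the rest is a routine rearrangement of sums, and the orthogonality reduces to a one-line divisibility observation followed by Cayley--Hamilton.
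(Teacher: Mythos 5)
Your proposal is correct, and the first half takes a genuinely different route from the paper's. The paper does not derive \eqref{eq: 2.20} from Theorem~\ref{Thm 22.4} at all: it invokes Proposition~\ref{Prop 2.2} with the exponential basis $\lbrace \frac{t^{r}}{r!}\e^{\alpha_{j}t}\rbrace$ of $D(\chi_{A})$ to get existence and uniqueness of constant matrices $B_{jk_{j}}$ with $\e^{tA}=\sum_{j}\sum_{k_{j}}\frac{t^{k_{j}}}{k_{j}!}\e^{\alpha_{j}t}B_{jk_{j}}$, then writes down two linear systems with the same confluent Vandermonde coefficient matrix --- one obtained by differentiating at $t=0$ (giving right-hand side $I,A,\ldots,A^{k-1}$), the other obtained from Spitzbart's formula \eqref{eq: 1.3} applied to the monomials $1,x,\ldots,x^{k-1}$ and then evaluated at $A$ --- and concludes $B_{jk_{j}}=L_{jk_{j}}(A)[\chi_{A}]$ by invertibility of that matrix. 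Your argument instead swaps the order of summation in Theorem~\ref{Thm 22.4} and identifies the inner sum $\sum_{i=0}^{k-1}\frac{1}{i!}L_{pr_{p}}^{(i)}(0)[\chi_{A}]A^{i}$ with $L_{pr_{p}}(A)[\chi_{A}]$ via the exact Maclaurin expansion of a polynomial of degree at most $k-1$; your degree count $(k-m_{p})+r_{p}+(m_{p}-1-r_{p})=k-1$ is the right justification and is correct. Your route is more computational and makes the key degree observation explicit, at the cost of presupposing Theorem~\ref{Thm 22.4}; the paper's route exhibits the $B_{jk_{j}}$ directly as the unique coefficients in a basis expansion, which is structurally cleaner and is what gets reused later (e.g.\ in Theorem~\ref{Thm 3.1}). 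For the orthogonality \eqref{eq: 2.21} your argument coincides with the paper's --- the paper merely asserts that $\chi_{A}$ divides the product $L_{ik_{i}}L_{jk_{j}}$ and appeals to Cayley--Hamilton, whereas you supply the factorization $P_{i}P_{j}=\chi_{A}\prod_{l\neq i,j}(x-\alpha_{l})^{m_{l}}$ that proves it, which is a welcome addition.
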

\begin{proof}
Using Proposition~\ref{Prop 2.2} with the fact that $$\lbrace \e^{\alpha_{1}t},t\e^{\alpha_{1}t},\ldots,\frac{t^{m_{1}-1}}{(m_{1}-1)!}\e^{\alpha_{1}t},\ldots,\e^{\alpha_{s}t},t\e^{\alpha_{s}t},\ldots, \frac{t^{m_{s}-1}}{(m_{s}-1)!}\e^{\alpha_{s}t} \rbrace $$ is a basis of $D(\chi_{A})$, we can find unique constant matrices $B_{jk_{j}}, j=1,2,\ldots,s$ and $0\leq k_{j}\leq m_{j}-1$, such that
$$\e^{tA}=\sum_{j=0}^{m_{1}-1}\frac{t^{j}}{j!}\e^{\alpha_{1}t}B_{1j}+\sum_{j=0}^{m_{2}-1}\frac{t^{j}}{j!}\e^{\alpha_{2}t}B_{2j}+\cdots+\sum_{j=0}^{m_{s}-1}\frac{t^{j}}{j!}\e^{\alpha_{s}t}B_{sj}$$
Applying Proposition~\ref{Prop 2.2} to this equality yields
\begin{align}\label{eq: xyz}
&\begin{pmatrix}\nonumber
    1 &  0 &\cdots & 0 & \cdots & 1 & 0 &\cdots&0  \\
   \alpha_{1} & 1 & \cdots &  0 & \cdots & \alpha_{s}& 1 &\cdots& 0  \\
    \alpha_{1}^{2} & 2\alpha_{1} & \cdots & 0& \cdots & \alpha_{s}^{2}& 2\alpha_{s} & \cdots& 0  \\
     \alpha_{1}^{3} &3\alpha_{1}^{2} & \cdots & 0 & \cdots & \alpha_{s}^{3}& 3\alpha_{s}^{2} & \cdots&0  \\
    \vdots & \vdots & \cdots & \vdots & \cdots & \vdots & \vdots & \cdots & \vdots\\
    \vdots & \vdots & \cdots & \vdots & \cdots & \vdots & \vdots & \cdots & \vdots\\
    \vdots & \vdots & \cdots & \vdots & \cdots & \vdots & \vdots & \cdots & \vdots\\
    \alpha_{1}^{k-1} & (k-1)\alpha_{1}^{k-2}   & \cdots & 1 &\cdots&\alpha_{s}^{k-1}& (k-1)\alpha_{s}^{k-2} &\cdots& 1
\end{pmatrix}\\
&\hspace*{7cm}\begin{pmatrix}
B_{10}\\
B_{11} \\
\vdots \\
B_{1m_{1}-1}\\
\vdots \\
B_{s0}\\
\vdots \\
B_{sm_{s}-1}
\end{pmatrix}
=
\begin{pmatrix}
I\\
A\\
\vdots \\
A^{m_{1}-1} \\
\vdots \\
\vdots \\
\vdots \\
A^{k-1}
\end{pmatrix}
\end{align}
On the other hand, utilizing Formula~\eqref{eq: 1.3} for the canonical basis of $\mathbb{C}_{k-1}[x]$, we obtain the following system
\begin{align*}
&\begin{pmatrix}
    1 &  0 &\cdots & 0 & \cdots & 1 & 0 &\cdots&0  \\
   \alpha_{1} & 1 & \cdots &  0 & \cdots & \alpha_{s}& 1 &\cdots& 0  \\
    \alpha_{1}^{2} & 2\alpha_{1} & \cdots & 0& \cdots & \alpha_{s}^{2}& 2\alpha_{s} & \cdots& 0  \\
     \alpha_{1}^{3} &3\alpha_{1}^{2} & \cdots & 0 & \cdots & \alpha_{s}^{3}& 3\alpha_{s}^{2} & \cdots&0  \\
    \vdots & \vdots & \cdots & \vdots & \cdots & \vdots & \vdots & \cdots & \vdots\\
    \vdots & \vdots & \cdots & \vdots & \cdots & \vdots & \vdots & \cdots & \vdots\\
    \vdots & \vdots & \cdots & \vdots & \cdots & \vdots & \vdots & \cdots & \vdots\\
    \alpha_{1}^{k-1} & (k-1)\alpha_{1}^{k-2}   & \cdots & 1 &\cdots&\alpha_{s}^{k-1}& (k-1)\alpha_{s}^{k-2} &\cdots& 1
\end{pmatrix}
\\&\hspace*{7cm}
\begin{pmatrix}
L_{10}(x)[\chi_{A}]\\
L_{11}(x)[\chi_{A}] \\
\vdots \\
L_{1m_{1}-1}(x)[\chi_{A}]\\
\vdots \\
L_{s0}(x)[\chi_{A}]\\
\vdots \\
L_{sm_{s}-1}(x)[\chi_{A}]
\end{pmatrix}
=
\begin{pmatrix}
1\\
x\\
\vdots \\
x^{m_{1}-1} \\
\vdots \\
\vdots \\
\vdots \\
x^{k-1}
\end{pmatrix}
\end{align*}
Replacing in this matrix equation $x$ with $A$ yields
\begin{align*}
&\begin{pmatrix}
    1 &  0 &\cdots & 0 & \cdots & 1 & 0 &\cdots&0  \\
   \alpha_{1} & 1 & \cdots &  0 & \cdots & \alpha_{s}& 1 &\cdots& 0  \\
    \alpha_{1}^{2} & 2\alpha_{1} & \cdots & 0& \cdots & \alpha_{s}^{2}& 2\alpha_{s} & \cdots& 0  \\
     \alpha_{1}^{3} &3\alpha_{1}^{2} & \cdots & 0 & \cdots & \alpha_{s}^{3}& 3\alpha_{s}^{2} & \cdots&0  \\
    \vdots & \vdots & \cdots & \vdots & \cdots & \vdots & \vdots & \cdots & \vdots\\
    \vdots & \vdots & \cdots & \vdots & \cdots & \vdots & \vdots & \cdots & \vdots\\
    \vdots & \vdots & \cdots & \vdots & \cdots & \vdots & \vdots & \cdots & \vdots\\
    \alpha_{1}^{k-1} & (k-1)\alpha_{1}^{k-2}   & \cdots & 1 &\cdots&\alpha_{s}^{k-1}& (k-1)\alpha_{s}^{k-2} &\cdots& 1
\end{pmatrix}\\
&\hspace*{7.5cm}\begin{pmatrix}
L_{10}(A)[\chi_{A}]\\
L_{11}(A)[\chi_{A}] \\
\vdots \\
L_{1m_{1}-1}(A)[\chi_{A}]\\
\vdots \\
L_{s0}(A)[\chi_{A}]\\
\vdots \\
L_{sm_{s}-1}(A)[\chi_{A}]
\end{pmatrix}
=
\begin{pmatrix}
I\\
A\\
\vdots \\
A^{m_{1}-1} \\
\vdots \\
\vdots \\
\vdots \\
A^{k-1}
\end{pmatrix}
\end{align*}
Since the confluent Vandermonde matrix is invertible, we have
$$B_{jk_{j}}=L_{jk_{j}}(A)[\chi_{A}] \,\,\text{for all}\,\, 1\leq j \leq s,\,\,0\leq k_{j} \leq m_{j}-1$$
On the other hand, by using~\eqref{eq: 1.1}, we can show that $\chi_{A}(x)$ divides
\begin{equation*}
L_{jk_{j}}(x)[\chi_{A}]L_{ik_{i}}(x)[\chi_{A}]
\end{equation*}
for $i\neq j$. As a consequence, we get
\begin{equation*}
B_{jk_{j}}B_{ik_{i}}=B_{ik_{i}}B_{jk_{j}}=0
\end{equation*}
\end{proof}
The following example is used to illustrate Theorem~\ref{Thm 22.4} and Theorem~\ref{Thm 2.7}.
\begin{example}\label{ex 2.12}
Let us consider Example $1.$ of~\cite{Eileo}
\begin{equation*}
A=\begin{pmatrix}
2 &  0 & 1   \\
   0 & 2 &0 \\
   0 & 0 & 3
\end{pmatrix}
\end{equation*}
The characteristic polynomial of $A$ is $\chi_{A}(x)=(x-2)^{2}(x-3)$. In this case, by using Formula~\eqref{eq: 1.1}, we have
\begin{align*}
L_{10}(x)[\chi_{A}]&=(x-3)(1-x)=-x^{2}+4x-3\\
L_{11}(x)[\chi_{A}]&=(x-3)(2-x)=-x^{2}+5x-6\\
L_{20}(x)[\chi_{A}]&=(x-2)^{2}=x^{2}-4x+4\\
\end{align*}
We deduce that
\begin{equation*}
\begin{array}{ccccccccc}
L_{10}(0)[\chi_{A}]&=&-3, &L_{10}^{'}(0)[\chi_{A}]&=&4, &L_{10}^{''}(0)[\chi_{A}]&=&-2 \\
L_{11}(x)[\chi_{A}]&=&-6, &L_{11}^{'}(0)[\chi_{A}]&=&5, &L_{11}^{''}(0)[\chi_{A}]&=&-2\\
L_{20}(x)[\chi_{A}]&=&4, &L_{20}^{'}(0)[\chi_{A}]&=&-4, &L_{20}^{''}(0)[\chi_{A}]&=&2\\
\end{array}
\end{equation*}
Using the formula of Theorem~\ref{Thm 22.4}, we obtain
$$
\e^{tA}=(\e^{2t}(-3-6t)+4\e^{3t})I+(\e^{2t}(4+5t)-4\e^{3t})A+
\frac{1}{2}(\e^{2t}(-2-2t)+2\e^{3t})A^{2}
$$
Therefore
\begin{equation*}
\e^{tA}=\begin{pmatrix}
\e^{2t} &  0 & \e^{3t}-\e^{2t} \\
   0 & \e^{2t} &0 \\
   0 & 0 & \e^{3t}
\end{pmatrix}
\end{equation*}
Now we find the matrix exponential of $A$, but using this time Theorem~\ref{Thm 2.7}. From Formula~\eqref{eq: 2.20}, we get
\begin{equation*}
\e^{tA}=\e^{2t}B_{10}+t\e^{2t}B_{11}+\e^{3t}B_{20}
\end{equation*}
where
\begin{align*}
B_{10}&=L_{10}(A)[\chi_{A}]=-A^{2}+4A-3I\\
B_{11}&=L_{11}(A)[\chi_{A}]=-A^{2}+5A-6I\\
B_{20}&=L_{20}(A)[\chi_{A}]=A^{2}-4A+4I\\
\end{align*}
More explicitly,
$$
B_{10}=\begin{pmatrix}
1 &  0 & -1   \\
   0 & 1 &0 \\
   0 & 0 & 0
\end{pmatrix},
B_{11}=\begin{pmatrix}
0 &  0 & 0   \\
   0 & 0 &0 \\
   0 & 0 & 0
\end{pmatrix},
B_{20}=\begin{pmatrix}
0 &  0 & 1   \\
   0 & 0 &0 \\
   0 & 0 & 1
\end{pmatrix}
$$
Thus
\begin{equation*}
\e^{tA}=\e^{2t}B_{10}+t\e^{2t}B_{11}+\e^{3t}B_{20}
\end{equation*}
That is
\begin{equation*}
\e^{tA}=\begin{pmatrix}
\e^{2t} &  0 & \e^{3t}-\e^{2t} \\
   0 & \e^{2t} &0 \\
   0 & 0 & \e^{3t}
\end{pmatrix}
\end{equation*}
\end{example}
Next, we derive some corollaries of Theorem~\ref{Thm 2.7}.
\begin{corollary}
If $\chi_{A}(x)=(x-\alpha_{1})^{m_{1}}(x-\alpha_{2})^{m_{2}}$, $\alpha_{1}$ and $\alpha_{2}$ are two distinct complex numbers, then
\begin{equation*}
\e^{tA}=\sum_{j=0}^{m_{1}-1}\frac{t^{j}}{j!}\e^{\alpha_{1}t}B_{1j}+\sum_{j=0}^{m_{2}-1}\frac{t^{j}}{j!}\e^{\alpha_{2}t}B_{2j}
\end{equation*}
$$ \left \{
\begin{array}{rcl}
B_{1j}&=&(A-\alpha_{1}I)^{j}(A-\alpha_{2}I)^{m_{2}}\displaystyle \sum_{i=0}^{m_{1}-j-1}\frac{(-1)^{i}\binom{m_{2}+i-1}{m_{2}-1}}{(\alpha_{1}-\alpha_{2})^{m_{2}+i}}(A-\alpha_{1}I)^{i}\\
B_{2j}&=&(A-\alpha_{1}I)^{m_{1}}(A-\alpha_{2}I)^{j}\displaystyle \sum_{i=0}^{m_{2}-j-1}\frac{(-1)^{i}\binom{m_{1}+i-1}{m_{1}-1}}{(\alpha_{2}-\alpha_{1})^{m_{1}+i}}(A-\alpha_{2}I)^{i}
\end{array}
\right.
$$
\end{corollary}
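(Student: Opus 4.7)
The plan is to specialize Theorem~\ref{Thm 2.7} to the two-eigenvalue case and then carry out the only nontrivial piece of bookkeeping, namely the explicit evaluation of the derivatives of $g_{j}(x)=(P_{j}(x))^{-1}$ at $x=\alpha_{j}$. Once those derivatives are in hand, plugging into the definition~\eqref{eq: 1.1} of $L_{jk_{j}}(x)[\chi_{A}]$ and replacing the variable $x$ by $A$ produces the claimed formulas directly, because Theorem~\ref{Thm 2.7} already asserts $B_{jk_{j}}=L_{jk_{j}}(A)[\chi_{A}]$.

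First I would specialize the data. With $s=2$ and $P=\chi_{A}=(x-\alpha_{1})^{m_{1}}(x-\alpha_{2})^{m_{2}}$, one has $P_{1}(x)=(x-\alpha_{2})^{m_{2}}$ and $P_{2}(x)=(x-\alpha_{1})^{m_{1}}$, hence $g_{1}(x)=(x-\alpha_{2})^{-m_{2}}$ and $g_{2}(x)=(x-\alpha_{1})^{-m_{1}}$. Substituting into~\eqref{eq: 1.1} gives
\begin{equation*}
L_{1k_{1}}(x)[\chi_{A}]=(x-\alpha_{2})^{m_{2}}(x-\alpha_{1})^{k_{1}}\sum_{i=0}^{m_{1}-1-k_{1}}\frac{1}{i!}g_{1}^{(i)}(\alpha_{1})(x-\alpha_{1})^{i},
\end{equation*}
and analogously for $L_{2k_{2}}(x)[\chi_{A}]$ with the roles of $\alpha_{1}$ and $\alpha_{2}$ interchanged.

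Next I would compute the derivatives of $g_{1}$ (the case of $g_{2}$ is identical by symmetry). Iterating $\frac{d}{dx}(x-\alpha_{2})^{-m_{2}}$ yields the generalized power rule
\begin{equation*}
g_{1}^{(i)}(x)=(-1)^{i}\,m_{2}(m_{2}+1)\cdots(m_{2}+i-1)\,(x-\alpha_{2})^{-m_{2}-i}=(-1)^{i}\frac{(m_{2}+i-1)!}{(m_{2}-1)!}(x-\alpha_{2})^{-m_{2}-i},
\end{equation*}
so that
\begin{equation*}
\frac{1}{i!}g_{1}^{(i)}(\alpha_{1})=\frac{(-1)^{i}\binom{m_{2}+i-1}{m_{2}-1}}{(\alpha_{1}-\alpha_{2})^{m_{2}+i}}.
\end{equation*}
The corresponding formula for $g_{2}^{(i)}(\alpha_{2})$ is obtained by swapping indices $1\leftrightarrow 2$.

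Finally, I would combine the two. Inserting the closed form for $\tfrac{1}{i!}g_{j}^{(i)}(\alpha_{j})$ into the expression for $L_{jk_{j}}(x)[\chi_{A}]$ above, setting $k_{j}=j$, and substituting $A$ for $x$ yields, after noting that all factors in the resulting polynomial in $A$ commute pairwise (they are all polynomials in $A$), exactly the stated formulas for $B_{1j}$ and $B_{2j}$. No obstacle of substance arises; the only thing to watch is the index bookkeeping and the identification of the coefficient $\tfrac{1}{i!}\cdot(-1)^{i}(m_{2}+i-1)!/(m_{2}-1)!$ with $(-1)^{i}\binom{m_{2}+i-1}{m_{2}-1}$, which is what makes the final formula look clean.
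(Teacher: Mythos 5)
Your proposal is correct and follows essentially the same route as the paper: both specialize Formula~\eqref{eq: 2.20} of Theorem~\ref{Thm 2.7} to $s=2$ and then identify $B_{jk_j}=L_{jk_j}(A)[\chi_A]$ via the explicit form of the $L$-polynomials. The only difference is that the paper simply asserts the closed forms of $L_{1j}$ and $L_{2j}$, whereas you actually derive them by differentiating $g_{1}(x)=(x-\alpha_{2})^{-m_{2}}$ and matching $\tfrac{1}{i!}g_{1}^{(i)}(\alpha_{1})$ with $(-1)^{i}\binom{m_{2}+i-1}{m_{2}-1}(\alpha_{1}-\alpha_{2})^{-m_{2}-i}$, which is the computation the paper leaves implicit.
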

\begin{proof}
This follows directly from~\eqref{eq: 2.20} and the fact that
$$ \left \{
\begin{array}{rcl}
L_{1j}(x)[\chi_{A}]&=&\displaystyle\sum_{i=0}^{m_{1}-j-1}\frac{(-1)^{i}\binom{m_{2}+i-1}{m_{2}-1}}{(\alpha_{1}-\alpha_{2})^{m_{2}+i}}(x-\alpha_{1})^{i+j}
(x-\alpha_{2})^{m_{2}}\\
L_{2j}(x)[\chi_{A}]&=&\displaystyle\sum_{i=0}^{m_{2}-j-1}\frac{(-1)^{i}\binom{m_{1}+i-1}{m_{1}-1}}{(\alpha_{2}-\alpha_{1})^{m_{1}+i}}(x-\alpha_{1})^{m_{1}}
(x-\alpha_{2})^{i+j}
\end{array}
\right.
$$
\end{proof}
\begin{corollary}
If $\chi_{A}(x)=(x-\alpha_{1})^{m_{1}}(x-\alpha_{2})$, $\alpha_{1}$ and $\alpha_{2}$ are distinct two complex numbers, then
\begin{equation*}
\e^{tA}=\sum_{j=0}^{m_{1}-1}\frac{t^{j}}{j!}\e^{\alpha_{1}t}B_{1j}+\e^{\alpha_{2}t}B_{20}
\end{equation*}
where
$$
\left \{
\begin{array}{rcl}
B_{1j}&=&(A-\alpha_{1}I)^{j}(A-\alpha_{2}I)\displaystyle\sum_{i=0}^{m_{1}-j-1}\frac{(-1)^{i}}{(\alpha_{1}-\alpha_{2})^{i+1}}(A-\alpha_{1}I)^{i}\\
B_{20}&=&\displaystyle\frac{1}{(\alpha_{2}-\alpha_{1})^{m_{1}}}(A-\alpha_{1}I)^{m_{1}}
\end{array}
\right.
$$
\end{corollary}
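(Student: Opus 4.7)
The plan is to derive this result as the special case of the preceding corollary in which $m_2 = 1$, and to verify the two resulting Taylor/binomial expressions match what is stated. Since both corollaries descend from Theorem~\ref{Thm 2.7} via the identification $B_{jk_j} = L_{jk_j}(A)[\chi_A]$, it is equally natural (and perhaps cleaner) to compute $L_{1j}(x)[\chi_A]$ and $L_{20}(x)[\chi_A]$ directly from Formula~\eqref{eq: 1.1} for the specific factorization $\chi_A(x) = (x-\alpha_1)^{m_1}(x-\alpha_2)$, and then substitute $A$ for $x$. I will follow the direct route because it is shorter here and makes the binomial coefficient $\binom{i}{0} = 1$ that collapses the previous corollary visible without appeal to a chain of specializations.

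First I would record that for this factorization, $P_1(x) = x - \alpha_2$ with $g_1(x) = 1/(x-\alpha_2)$, and $P_2(x) = (x-\alpha_1)^{m_1}$ with $g_2(x) = 1/(x-\alpha_1)^{m_1}$. The only nontrivial Taylor expansion needed is that of $g_1$ about $\alpha_1$, namely
\begin{equation*}
\frac{1}{k!}g_1^{(k)}(\alpha_1) = \frac{(-1)^k}{(\alpha_1 - \alpha_2)^{k+1}},
\end{equation*}
which follows by differentiating $1/(x-\alpha_2)$ repeatedly. For $g_2$ one only needs its value at $\alpha_2$, which is $1/(\alpha_2-\alpha_1)^{m_1}$, because the sum in~\eqref{eq: 1.1} for $(j,k_j) = (2,0)$ collapses to a single $i = 0$ term (as $m_2 - 1 - k_2 = 0$).

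Plugging into~\eqref{eq: 1.1}, the two relevant polynomials are
\begin{equation*}
L_{1j}(x)[\chi_A] = (x-\alpha_2)(x-\alpha_1)^{j}\sum_{i=0}^{m_1 - j - 1}\frac{(-1)^i}{(\alpha_1-\alpha_2)^{i+1}}(x-\alpha_1)^{i},
\qquad
L_{20}(x)[\chi_A] = \frac{(x-\alpha_1)^{m_1}}{(\alpha_2-\alpha_1)^{m_1}}.
\end{equation*}
Substituting $x \mapsto A$ yields exactly the matrices $B_{1j}$ and $B_{20}$ claimed. Then invoking Theorem~\ref{Thm 2.7} with $s = 2$, $m_2 = 1$ gives the advertised expression for $\e^{tA}$, since the inner sum over $k_2$ in~\eqref{eq: 2.20} reduces to the single term with $k_2 = 0$.

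There is no real obstacle: the computation is essentially bookkeeping. The only place a reader might slip is in the sign of $(\alpha_1 - \alpha_2)^{i+1}$ versus $(\alpha_2 - \alpha_1)^{i+1}$ in the $B_{1j}$ expansion; I would therefore write out the first derivative of $1/(x-\alpha_2)$ explicitly to fix the sign convention, and note the elementary identity $\binom{m_2 + i - 1}{m_2 - 1} = \binom{i}{0} = 1$ which is what makes the general $B_{1j}$ formula of the preceding corollary degenerate into the binomial-free form given here.
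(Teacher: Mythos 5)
Your proof is correct and follows essentially the same route as the paper: the paper states this corollary without a separate proof, as the $m_{2}=1$ specialization of the preceding corollary, whose own proof is exactly your computation of $L_{1j}(x)[\chi_{A}]$ and $L_{20}(x)[\chi_{A}]$ from Formula~\eqref{eq: 1.1} followed by an appeal to Formula~\eqref{eq: 2.20} of Theorem~\ref{Thm 2.7}. Your sign check on $g_{1}^{(i)}(\alpha_{1})$ and the observation that $\binom{m_{2}+i-1}{m_{2}-1}=1$ when $m_{2}=1$ confirm the agreement.
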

More generally, we have the following result.
\begin{corollary}\label{Corr102}
If $\chi_{A}(x)=(x-\alpha_{1})^{m_{1}}\prod\limits_{j=2}^{s}(x-\alpha_{j})$, $\alpha_{1},\ldots, \alpha_{s}$ are distinct complex numbers, then
\begin{equation}\label{eq: aaaa}
\e^{tA}=\sum_{j=0}^{m_{1}-1}\frac{t^{j}}{j!}\e^{\alpha_{1}t}B_{1j}+\sum_{j=2}^{s}\e^{\alpha_{j}t}\frac{1}{P_{j}(\alpha_{j})}P_{j}(A)
\end{equation}
where
$P=\chi_{A},$
$$B_{1j}=(A-\alpha_{1}I)^{j}\prod\limits_{l=2}^{s}(A-\alpha_{l}I)\sum_{i=0}^{m_{1}-j-1}\sum_{l=2}^{s}\frac{(-1)^{i}
a_{l}}{(\alpha_{1}-\alpha_{l})^{i+1}}(A-\alpha_{1}I)^{i}$$
and
\begin{align*}
a_{l}=\displaystyle\begin{cases}
\dfrac{1}{\prod\limits_{p=2,p\neq l}^{s}(\alpha_{l}-\alpha_{p})} &\text{if}\quad s\geq3\\
1 &\text{if}\quad s=2
\end{cases}
\end{align*}
\end{corollary}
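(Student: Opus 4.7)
The plan is to invoke Theorem~\ref{Thm 2.7} directly: it already gives $\e^{tA}=\sum_{j,k_{j}}\frac{t^{k_{j}}}{k_{j}!}\e^{\alpha_{j}t}L_{jk_{j}}(A)[\chi_{A}]$, so the entire task reduces to simplifying the polynomials $L_{jk_{j}}(x)[\chi_{A}]$ in the special case where only $\alpha_{1}$ carries multiplicity greater than one.

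First I would dispose of the $s-1$ simple eigenvalues: for $j\geq 2$ one has $m_{j}=1$, so the only admissible index is $k_{j}=0$ and the inner sum in~\eqref{eq: 1.1} collapses to its $i=0$ term, namely $g_{j}(\alpha_{j})=1/P_{j}(\alpha_{j})$. Substituting $A$ for $x$ immediately produces $B_{j0}=P_{j}(A)/P_{j}(\alpha_{j})$, which is exactly the second sum appearing in~\eqref{eq: aaaa}.

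The substantive step is the computation of $L_{1k_{1}}(x)[\chi_{A}]$. Here $P_{1}(x)=\prod_{l=2}^{s}(x-\alpha_{l})$ has only simple roots, so I would expand $g_{1}(x)=1/P_{1}(x)$ in partial fractions: $g_{1}(x)=\sum_{l=2}^{s}\frac{a_{l}}{x-\alpha_{l}}$, where $a_{l}=\lim_{x\to\alpha_{l}}(x-\alpha_{l})g_{1}(x)$ is precisely the coefficient stated in the corollary (the $s=2$ case reducing trivially to $a_{2}=1$). Differentiating termwise $i$ times and evaluating at $\alpha_{1}$ gives $\frac{1}{i!}g_{1}^{(i)}(\alpha_{1})=\sum_{l=2}^{s}\frac{(-1)^{i}a_{l}}{(\alpha_{1}-\alpha_{l})^{i+1}}$. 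Plugging this into~\eqref{eq: 1.1} with $j=1$ and $k_{1}$ arbitrary, and then replacing $x$ by $A$ (so that $P_{1}(A)=\prod_{l=2}^{s}(A-\alpha_{l}I)$), yields exactly the stated formula for $B_{1j}$.

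The only mildly delicate point is the partial fraction decomposition for $g_{1}$ together with the sign and factorial bookkeeping coming from the derivatives of $(x-\alpha_{l})^{-1}$; once those are settled, the result is obtained by direct substitution into the identities already established, with no further nontrivial calculation required.
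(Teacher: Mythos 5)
Your proposal is correct and follows essentially the same route as the paper: both reduce the statement to Theorem~\ref{Thm 2.7}, collapse the terms for the simple eigenvalues to $P_{j}(A)/P_{j}(\alpha_{j})$, and compute $L_{1j}(x)[\chi_{A}]$ via the partial-fraction expansion $g_{1}(x)=\sum_{l=2}^{s}a_{l}/(x-\alpha_{l})$ and its $i$th derivative at $\alpha_{1}$. The sign and factorial bookkeeping you describe matches the paper's computation exactly.
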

\begin{proof}
In this case, Formula~\eqref{eq: 2.20} becomes
\begin{equation*}
\e^{tA}=\sum_{j=0}^{m_{1}-1}\frac{t^{j}}{j!}\e^{\alpha_{1}t}B_{1j}+\e^{\alpha_{2}t}B_{20}+\cdots+\e^{\alpha_{s}t}B_{s0}
\end{equation*}
where, for $j=2,\ldots,s$,
\begin{equation*}
B_{j0}=L_{j0}(A)[P]=P_{j}(A)g_{j}(\alpha_{j})=\frac{1}{P_{j}(\alpha_{j})}P_{j}(A)
\end{equation*}
On the other hand, for $j=0,\ldots,m_{1}-1$, we have
$B_{1j}=L_{1j}(A)[P]$ where
\begin{align*}
L_{1j}(x)[P]&=P_{1}(x)(x-\alpha_{1})^{j}\sum_{i=0}^{m_{1}-1-j}\frac{1}{i!}g^{(i)}_{1}(\alpha_{1})(x-\alpha_{1})^{i},\\
P_{1}(x)&=\prod_{l=2}^{s}(x-\alpha_{l})\\
g_{1}(x)&=\frac{1}{\prod\limits_{l=2}^{s}(x-\alpha_{l})}
\end{align*}
But since
\begin{equation*}
g_{1}(x)=\sum_{l=2}^{s} \frac{a_{l}}{x-\alpha_{l}}
\end{equation*}
we have
\begin{equation*}
g_{1}^{(i)}(x)=\sum_{l=2}^{s} \frac{(-1)^{i}i!a_{l}}{(x-\alpha_{l})^{i+1}}.
\end{equation*}
Then
\begin{equation*}
L_{1j}(x)[P]=(x-\alpha_{1})^{j}\prod\limits_{l=2}^{s}(x-\alpha_{l})\sum_{i=0}^{m_{1}-1-j}\sum_{l=2}^{s} \frac{(-1)^{i}a_{l}}{(\alpha_{1}-\alpha_{l})^{i+1}} (x-\alpha_{1})^{i}.
\end{equation*}
Therefore
\begin{equation*}
B_{1j}=(A-\alpha_{1}I)^{j}\prod_{l=2}^{s}(A-\alpha_{l}I)\sum_{i=0}^{m_{1}-j-1}\sum_{l=2}^{s}\frac{(-1)^{i}
a_{l}}{(\alpha_{1}-\alpha_{l})^{i+1}}(A-\alpha_{1}I)^{i}.
\end{equation*}
Thus, the proof is completed.
\end{proof}
To illustrate Corollary~\ref{Corr102}, consider the following example.
\begin{example}
Let $$A=\begin{pmatrix}
1&1&0&0\\1&1&0&0\\2&3&-1&1\\1&1&1&-1
\end{pmatrix}$$
The characteristic polynomial of $A$ is $\chi_{A}(x)=x^{2}(x+2)(x-2)$. Let us choose, for example, $\alpha_{1}=0,\alpha_{2}=-2$ and $\alpha_{3}=2$. Then,
in light of Formula~\eqref{eq: aaaa}, the exponential matrix of $A$ is given by
\begin{equation}\label{eqq101}
\e^{tA}=B_{10}+tB_{11}+\e^{-2t}B_{20}+\e^{2t}B_{30}
\end{equation}
where
\begin{align*}
\begin{cases}
B_{10}=\displaystyle(A+2I)(A-2I)\Big[(\frac{a_{2}}{\alpha_{1}-\alpha_{2}}+\frac{a_{3}}{\alpha_{1}-\alpha_{3}})I-
(\frac{a_{2}}{(\alpha_{1}-\alpha_{2})^{2}}+\frac{a_{3}}{(\alpha_{1}-\alpha_{3})^{2}})A\Big]\vspace*{0.5pc}\\
B_{11}=A(A+2I)(A-2I)\displaystyle\Big[(\frac{a_{2}}{\alpha_{1}-\alpha_{2}}+\frac{a_{3}}{\alpha_{1}-\alpha_{3}})\Big]
\end{cases}
\end{align*}
and
$$
\left \{
\begin{array}{rcl}
B_{20}&=&\displaystyle\frac{1}{P_{2}(\alpha_{2})}P_{2}(A)=\frac{-1}{16}A^{2}(A-2I)=\frac{-1}{16}A^{3}+\frac{1}{8}A^{2}\vspace*{0.5pc}\\
B_{30}&=&\displaystyle\frac{1}{P_{3}(\alpha_{3})}P_{3}(A)=\frac{1}{16}A^{2}(A+2I)=\frac{1}{16}A^{3}+\frac{1}{8}A^{2}
\end{array}
\right.
$$
In this case $a_{2}=\displaystyle\frac{1}{\alpha_{2}-\alpha_{3}}=\frac{-1}{4}$ and $a_{3}=\displaystyle\frac{1}{\alpha_{3}-\alpha_{2}}=\frac{1}{4}$. Consequently, we have
\begin{equation*}
\left \{
\begin{array}{rcl}
B_{10}&=&\dfrac{-1}{4}A^{2}+I\vspace*{0.2pc}\\
B_{11}&=&\dfrac{-1}{4}A^{3}+A\vspace*{0.2pc}\\
B_{20}&=&\dfrac{-1}{16}A^{3}+\dfrac{1}{8}A^{2}\vspace*{0.2pc}\\
B_{30}&=&\dfrac{1}{16}A^{3}+\dfrac{1}{8}A^{2}
\end{array}
\right.
\end{equation*}
Hence Formula~\eqref{eqq101} becomes
\begin{equation*}
\e^{tA}=(\frac{\e^{2t}}{16}-\frac{-\e^{-2t}}{16}-\frac{t}{4})A^{3}+(\frac{\e^{2t}}{8}+\frac{\e^{-2t}}{8}-\frac{1}{4})A^{2}+tA+I
\end{equation*}
Since $$A^{2}=\begin{pmatrix}
2&2&0&0\\2&2&0&0\\4&3&2&-2\\3&4&-2&2
\end{pmatrix}$$
and
$$A^{3}=\begin{pmatrix}
4&4&0&0\\4&4&0&0\\9&11&-4&4\\5&3&4&-4
\end{pmatrix}$$
we obtain
\renewcommand{\arraystretch}{2.2}
$$
\e^{tA}=\begin{pmatrix}
\dfrac{\e^{2t}+1}{2}&\dfrac{\e^{2t}-1}{2}&0&0\\
\dfrac{\e^{2t}-1}{2}&\dfrac{\e^{2t}+1}{2}&0&0\\
\dfrac{17\e^{2t}-\e^{-2t}-4t-16}{16}&\dfrac{17\e^{2t}-5\e^{-2t}+4t-12}{16}&\dfrac{\e^{-2t}+1}{2}&\dfrac{-\e^{-2t}+1}{2}\\
\dfrac{11\e^{2t}+\e^{-2t}-4t-12}{16}&\dfrac{11\e^{2t}+5\e^{-2t}+4t-16}{16}&\dfrac{-\e^{-2t}+1}{2}&\dfrac{\e^{-2t}+1}{2}
\end{pmatrix}
$$
\end{example}
The following result is due to W. A. Harris et al.~\cite{Harri}
\begin{corollary}\label{Thm 2.8}
If $\chi_{A}(x)=(x-\alpha_{1})(x-\alpha_{2})\cdots(x-\alpha_{k})$ has distinct roots $\alpha_{1},\alpha_{2},\ldots,\alpha_{k}$, then
\begin{equation*}
\e^{tA}=\e^{\alpha_{1}t}B_{1}+\e^{\alpha_{2}t}B_{2}+\cdots+\e^{\alpha_{k}t}B_{k}
\end{equation*}
where $B_{i}=\prod_{j=1,j\neq i}^{k}\dfrac{x-\alpha_{j}}{\alpha_{i}-\alpha_{j}}$. Moreover, $B_{i}B_{j}=B_{j}B_{i}=0$ if $i\neq j$, and $B_{i}^2=B_{i}$.
\end{corollary}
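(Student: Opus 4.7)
The plan is to derive this corollary as the special case of Theorem~\ref{Thm 2.7} in which every algebraic multiplicity $m_{j}$ equals $1$. First, I would specialize Formula~\eqref{eq: 2.20}: with $s=k$ and each $m_{j}=1$, the inner sum over $k_{j}$ collapses to the single term $k_{j}=0$, and $t^{0}/0!=1$, so the formula immediately reduces to $\e^{tA}=\sum_{j=1}^{k}\e^{\alpha_{j}t}B_{j}$, where $B_{j}:=B_{j0}=L_{j0}(A)[\chi_{A}]$.

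Next, I would compute $L_{j0}(x)[\chi_{A}]$ explicitly in this special setting. When $m_{j}=1$, the upper bound $m_{j}-1-k_{j}$ in~\eqref{eq: 1.1} equals $0$, so the defining sum degenerates to the single term $g_{j}(\alpha_{j})P_{j}(x)=P_{j}(x)/P_{j}(\alpha_{j})$. Since $P_{j}(\alpha_{j})=\prod_{i\neq j}(\alpha_{j}-\alpha_{i})$, substituting $A$ yields the Lagrange projector $B_{i}=\prod_{j\neq i}(A-\alpha_{j}I)/(\alpha_{i}-\alpha_{j})$ exactly as claimed in the statement.

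The orthogonality $B_{i}B_{j}=B_{j}B_{i}=0$ for $i\neq j$ is then inherited directly from Theorem~\ref{Thm 2.7}, which already establishes it for all the $B_{jk_{j}}$. For the idempotence $B_{i}^{2}=B_{i}$, the natural trick is to evaluate the established expansion of $\e^{tA}$ at $t=0$: the left-hand side becomes $I$, while the right-hand side becomes $\sum_{j=1}^{k}B_{j}$. Multiplying the resulting identity $I=\sum_{j=1}^{k}B_{j}$ on either side by $B_{i}$ and invoking the orthogonality kills every cross term, leaving $B_{i}=B_{i}^{2}$.

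I do not foresee any genuine obstacle, as every ingredient is already packaged in Theorem~\ref{Thm 2.7} and Formula~\eqref{eq: 1.1}. The only care required is verifying that the truncated inner sum in~\eqref{eq: 1.1} really does degenerate to a single term when $m_{j}=1$ and that the resulting scalar factor $1/P_{j}(\alpha_{j})$ combines with $P_{j}(A)$ to give precisely the Lagrange interpolation expression stated in the corollary; this is a short calculation rather than a substantive difficulty.
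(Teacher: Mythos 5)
Your proposal is correct and follows essentially the same route as the paper, which simply cites Corollary~\ref{Thm 2.8} as the particular case $m_{1}=\cdots=m_{k}=1$ of Theorem~\ref{Thm 2.7}; you merely fill in the details, including the collapse of~\eqref{eq: 1.1} to the Lagrange factor $P_{j}(x)/P_{j}(\alpha_{j})$. Your argument for the idempotence (evaluating at $t=0$ to get $I=\sum_{j}B_{j}$ and multiplying by $B_{i}$) is exactly the device the paper itself uses later in the proof of Theorem~\ref{Thm 3.3}, so nothing is genuinely different.
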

\begin{proof}
Is a particular case of Theorem~\ref{Thm 2.7}.
\end{proof}
When $A$ is a matrix with only one eigenvalue, we have the following known result shown by Apostol~\cite{Tmapo}
\begin{corollary}\label{Thm 2.9}
If $\chi_{A}(x)=(x-\alpha)^{k}$ has a single root $\alpha$, then
\begin{equation*}
\e^{tA}=\e^{\alpha t}B_{1}+t\e^{\alpha t}B_{2}+\cdots+\frac{t^{k-1}}{(k-1)!}\e^{\alpha t}B_{k}
\end{equation*}
where $B_{i}=(A-\alpha I)^{i-1},1\leq i\leq k$.
\end{corollary}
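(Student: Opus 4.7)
The plan is to deduce this directly from Theorem~\ref{Thm 2.7} by specializing to the single-eigenvalue case. In the hypothesis $\chi_{A}(x)=(x-\alpha)^{k}$ we have $s=1$, $\alpha_{1}=\alpha$ and $m_{1}=k$, so the double sum in Formula~\eqref{eq: 2.20} collapses to a single sum
\begin{equation*}
\e^{tA}=\sum_{j=0}^{k-1}\frac{t^{j}}{j!}\e^{\alpha t}B_{1j},
\end{equation*}
with $B_{1j}=L_{1j}(A)[\chi_{A}]$. After this reduction, the only remaining task is to evaluate the polynomials $L_{1j}(x)[\chi_{A}]$ explicitly in this degenerate setting.

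The evaluation is immediate from definition~\eqref{eq: 1.1}. Since there is a single factor, the product $P_{1}(x)=\prod_{i\neq 1}(x-\alpha_{i})^{m_{i}}$ is empty and equals $1$, hence $g_{1}(x)=(P_{1}(x))^{-1}=1$. Consequently $g_{1}^{(i)}(\alpha)=0$ for every $i\geq 1$ while $g_{1}(\alpha)=1$, so only the $i=0$ term of the inner sum in~\eqref{eq: 1.1} contributes. This collapses the expression to
\begin{equation*}
L_{1j}(x)[\chi_{A}]=(x-\alpha)^{j},
\end{equation*}
and therefore $B_{1j}=(A-\alpha I)^{j}$. Setting $i=j+1$ so that $B_{i}=(A-\alpha I)^{i-1}$ for $1\leq i\leq k$ recovers exactly the statement of the corollary.

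There is essentially no obstacle here; the argument is a direct specialization of an already proved general formula, and the only computation is the observation that the Hermite-type polynomials $L_{1j}(x)[\chi_{A}]$ degenerate to pure powers $(x-\alpha)^{j}$ when the underlying polynomial has a single root. One could alternatively derive the same conclusion from Corollary~\ref{Thm 2.5}, where the expansion for $y_{i}(t)$ in the single-root case is already worked out, but routing the proof through Theorem~\ref{Thm 2.7} is cleaner because it simultaneously exhibits the spectral projection structure (indeed, in this case there is only one block $B_{10}=I$, reflecting that $A$ has a single generalized eigenspace equal to the whole space).
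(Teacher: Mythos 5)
Your proposal is correct and follows exactly the paper's route: the paper's proof of this corollary is simply ``Follows immediately from Theorem~\ref{Thm 2.7}'', and you have specialized Formula~\eqref{eq: 2.20} to $s=1$ and correctly verified from~\eqref{eq: 1.1} that $P_{1}(x)=1$, $g_{1}(x)=1$, hence $L_{1j}(x)[\chi_{A}]=(x-\alpha)^{j}$ and $B_{1j}=(A-\alpha I)^{j}$. You have merely made explicit the computation the paper leaves implicit.
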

\begin{proof}
Follows immediately from Theorem~\ref{Thm 2.7}.
\end{proof}
\section{Closed-form formula for the $n$th power of a matrix and Drazin inverses}
\label{sec:3}
In this section, we present some applications of the previous results. We begin by proposing a new closed-form formula for the $n$th power of an arbitrary complex matrix $A$. We then deduce another closed-form formula for the $n$th power of the Drazin inverse $A_{D}$ of $A$. Also, we will be interested in determining explicit formulas for the Chevalley--Jordan decomposition and the spectral projections of $A$. Our contribution does not consist only in giving these formulas, but also in making their determination much practical and expressing them in an elegant representations.
\begin{theorem}\label{Thm 3.1}
Let $A$ be a $k\times k$ matrix, and let $\chi_{A}(x)=(x-\alpha_{1})^{m_{1}}(x-\alpha_{2})^{m_{2}}\cdots(x-\alpha_{s})^{m_{s}}$,$\alpha_{1}=0$, be its characteristic polynomial (possibly $m_{1}=0$). Then for every $n\in \mathbb{N}$, we have
\begin{equation}\label{eq: 3.1}
A^{n}=\sum_{j=0}^{m_{1}-1}\delta_{nj}B_{1j}+\sum_{j=0}^{m_{2}-1}\binom{n}{j}\alpha_{2}^{n-j}B_{2j}+\cdots+\sum_{j=0}^{m_{s}-1}\binom{n}{j}\alpha_{s}^{n-j}B_{sj}
\end{equation}
where $B_{jk_{j}}=L_{jk_{j}}(A)[\chi_{A}]$ and $\delta_{nj}$ denotes the Kronecker symbol.
\end{theorem}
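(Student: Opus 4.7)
The plan is to obtain Theorem 3.1 as a direct consequence of Theorem 2.7 together with Proposition 2.2, by extracting $A^n$ as the $n$th derivative at $t=0$ of $e^{tA}$. The machinery is already assembled: Theorem 2.7 provides the representation
\begin{equation*}
e^{tA}=\sum_{j=1}^{s}\sum_{k_j=0}^{m_j-1}\frac{t^{k_j}}{k_j!}e^{\alpha_j t}B_{jk_j},\qquad B_{jk_j}=L_{jk_j}(A)[\chi_A],
\end{equation*}
and the set $\{\,t^{k_j}e^{\alpha_j t}/k_j!\,\}$ is a basis of $D(\chi_A)$, so Proposition 2.2 (the ``evaluation at $t=0$ of the $n$th derivative'' part) applies verbatim to this basis.

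First, I would apply Proposition 2.2 with the basis $e_{jk_j}(t)=\frac{t^{k_j}}{k_j!}e^{\alpha_j t}$ to obtain
\begin{equation*}
A^n=\sum_{j=1}^{s}\sum_{k_j=0}^{m_j-1}e_{jk_j}^{(n)}(0)\,B_{jk_j}.
\end{equation*}
Next, I would compute $e_{jk_j}^{(n)}(0)$ via the Leibniz rule: since $(t^{k_j})^{(i)}\big|_{t=0}$ is $k_j!$ when $i=k_j$ and $0$ otherwise, only the $i=k_j$ term survives, giving
\begin{equation*}
e_{jk_j}^{(n)}(0)=\binom{n}{k_j}\alpha_j^{n-k_j}\qquad(k_j\le n),
\end{equation*}
and $0$ when $k_j>n$. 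For the special index $\alpha_1=0$, the factor $\alpha_1^{n-k_1}$ forces the coefficient to collapse to $\delta_{n\,k_1}$ (with the usual convention $0^0=1$), which is precisely the Kronecker contribution appearing in the first sum of \eqref{eq: 3.1}. For the remaining indices $j\ge 2$ (where $\alpha_j\ne 0$), the coefficient reads $\binom{n}{k_j}\alpha_j^{n-k_j}$, which matches the remaining sums in \eqref{eq: 3.1}. Substituting and renaming $k_j$ as $j$ in the display yields the stated identity.

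There is essentially no hard step here; the entire theorem is a one-line differentiation of the formula already proved in Theorem 2.7. The only place where a reader could stumble is handling the degenerate factor $0^{n-k_1}$ in the $\alpha_1=0$ case, so the one thing I would be careful to spell out is that the Leibniz computation automatically produces $\delta_{n\,k_1}$, explaining the asymmetric presentation of the sum over $j=1$ versus $j\ge 2$ in the statement. If desired, one could even unify the display as
\begin{equation*}
A^n=\sum_{j=1}^{s}\sum_{k_j=0}^{m_j-1}\binom{n}{k_j}\alpha_j^{n-k_j}\,B_{jk_j},
\end{equation*}
with the convention $0^0=1$; the form given in the theorem simply isolates the $\alpha_1=0$ contribution for later use in the Drazin-inverse applications.
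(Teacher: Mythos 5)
Your proof is correct and follows the same route the paper intends: the paper's own proof is the one-liner ``Using Theorem~\ref{Thm 2.7}, we can easily prove this result,'' and the mechanism is exactly your application of Proposition~\ref{Prop 2.2} to the basis $\frac{t^{k_j}}{k_j!}\e^{\alpha_j t}$ followed by the Leibniz computation of the $n$th derivative at $0$. Your explicit handling of the $\alpha_1=0$ case (collapsing $\binom{n}{k_1}0^{\,n-k_1}$ to $\delta_{nk_1}$) fills in the detail the paper leaves implicit.
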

\begin{proof}
Using Theorem~\ref{Thm 2.7}, we can easily prove this result.
\end{proof}
\begin{remark} It is easy to verify the following
\begin{equation*}
B_{jk_{j}}B_{jm_{j}-k_{j}}=0
\end{equation*}
\end{remark}
Using the previous theorem we find a new method to calculate the Chevalley--Jordan decomposition and the spectral projections of $A$ at the same time.
\begin{theorem}\label{Thm 3.3}
Let $A$ be a $k\times k$ matrix, and let $\chi_{A}(x)=(x-\alpha_{1})^{m_{1}}(x-\alpha_{2})^{m_{2}}\cdots(x-\alpha_{s})^{m_{s}}$, $\alpha_{1}=0$, be its characteristic polynomial (possibly $m_{1}=0$). Then the Chevalley--Jordan decomposition of $A$ is
\begin{equation}\label{eq: 3.5}
A=\pmb{D}+\pmb{N}
\end{equation}
where
\begin{equation}\label{eq: 3.6}
\pmb{N}=B_{11}+B_{21}+\cdots+B_{s1}
\end{equation}
and
\begin{equation}\label{eq: 3.7}
\pmb{D}=\alpha_{2}B_{20}+\alpha_{3}B_{30}+\cdots+\alpha_{s}B_{s0}
\end{equation}
where $B_{jr}=L_{jr}(A)[\chi_{A}]$ for $j=1,2,\ldots,s$ and $r=0,1$.
Moreover, $B_{10},B_{20},\ldots,B_{s0}$ are the spectral projections of $A$ at $\alpha_{1},\alpha_{2},\ldots,\alpha_{s}$, respectively.
\end{theorem}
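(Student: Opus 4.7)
The plan is to obtain the decomposition by specializing the general power formula of Theorem~\ref{Thm 3.1} at the values $n=1$ and $n=0$, and then to verify that the two summands produced are genuinely the semisimple and nilpotent parts required by the Chevalley--Jordan decomposition.

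First I would set $n=1$ in Formula~\eqref{eq: 3.1}. Since $\binom{1}{j}\alpha^{1-j}$ equals $\alpha$ at $j=0$, equals $1$ at $j=1$, and vanishes for $j\geq 2$, and since $\delta_{1j}$ only survives at $j=1$ (with the convention $B_{j1}=0$ when $m_j\leq 1$), the right--hand side collapses to $B_{11}+\sum_{p=2}^{s}(\alpha_p B_{p0}+B_{p1})$, which is exactly $\pmb{D}+\pmb{N}$ as specified by~\eqref{eq: 3.6}--\eqref{eq: 3.7}. Next I would apply~\eqref{eq: 3.1} at $n=0$ (equivalently, Formula~\eqref{eq: 1.3} for the constant polynomial $Q\equiv 1$); since $\binom{0}{j}$ and $\delta_{0j}$ are both nonzero only at $j=0$, this yields the resolution of the identity $I=\sum_{j=1}^{s}B_{j0}$. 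Combined with the orthogonality $B_{ik_i}B_{jk_j}=0$ for $i\neq j$ from Theorem~\ref{Thm 2.7}, this immediately forces $B_{j0}^{2}=B_{j0}$, so the $B_{j0}$ form a complete set of mutually orthogonal projections. To identify them as the spectral projections at $\alpha_j$, I would observe that $L_{j0}(x)[\chi_A]$ carries the factor $P_j(x)$, so $(x-\alpha_j)^{m_j}L_{j0}(x)[\chi_A]$ is a multiple of $\chi_A(x)$; Cayley--Hamilton then gives $(A-\alpha_jI)^{m_j}B_{j0}=0$, that is, $\operatorname{Im}(B_{j0})\subseteq \ker(A-\alpha_jI)^{m_j}$, with equality following from $I=\sum B_{j0}$ together with the direct sum $\mathbb{C}^{k}=\bigoplus_j\ker(A-\alpha_jI)^{m_j}$.

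With the projections identified, the semisimplicity of $\pmb{D}$ is transparent: on the generalized eigenspace at $\alpha_p$ only $B_{p0}$ acts as the identity while the other $B_{j0}$ vanish, so $\pmb{D}$ restricts to multiplication by $\alpha_p$ there and is therefore diagonalizable. For nilpotency of $\pmb{N}$, the orthogonality $B_{i1}B_{j1}=0$ for $i\neq j$ from Theorem~\ref{Thm 2.7} gives $\pmb{N}^{n}=\sum_{j}B_{j1}^{n}$, and since $L_{j1}(x)[\chi_A]$ already contains the factor $P_j(x)(x-\alpha_j)$, its $m_j$-th power is a multiple of $\chi_A(x)$, so $B_{j1}^{m_j}=0$ again by Cayley--Hamilton. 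Both $\pmb{D}$ and $\pmb{N}$ are polynomials in $A$ and hence commute, so by uniqueness of the Chevalley--Jordan decomposition the identities~\eqref{eq: 3.5}--\eqref{eq: 3.7} are indeed that decomposition.

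The main obstacle I anticipate is the edge-case bookkeeping when $m_1=0$ (so $0$ is not actually an eigenvalue and the $B_{1\,\cdot}$ terms must be dropped) and when some $m_j=1$ (so $B_{j1}$ is undefined and should be read as zero); one must check that the specializations of~\eqref{eq: 3.1} above still reproduce the formulas stated in~\eqref{eq: 3.6}--\eqref{eq: 3.7} in these degenerate configurations. Beyond this, the proof reduces to straightforward divisibility manipulations in $\mathbb{C}[x]$ combined with Cayley--Hamilton, all of which are already at our disposal via Theorems~\ref{Thm 2.7} and~\ref{Thm 3.1}.
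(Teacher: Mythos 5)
Your proof is correct and follows essentially the same route as the paper: specialize Formula~\eqref{eq: 3.1} at $n=1$ and $n=0$, use the orthogonality relations of Theorem~\ref{Thm 2.7} to get $B_{j0}^{2}=B_{j0}$, note that $\pmb{D}$ and $\pmb{N}$ are polynomials in $A$ and hence commute, and check diagonalizability of $\pmb{D}$ and nilpotency of $\pmb{N}$. You are in fact more self-contained than the paper on two points it leaves to the reader or to a citation of~\cite{Mmou} --- the identification of the $B_{j0}$ as spectral projections via $(A-\alpha_jI)^{m_j}B_{j0}=0$ and the primary decomposition, and the explicit bound $B_{j1}^{m_j}=0$ from Cayley--Hamilton --- but these are elaborations of the same argument, not a different one.
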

\begin{proof}
To obtain
\begin{equation*}
A=\pmb{D}+\pmb{N}
\end{equation*}
and
\begin{equation}\label{eq: 3.8}
I=B_{10}+B_{20}+\cdots+B_{s0},
\end{equation}
it suffices to take $n=1$ and $n=0$ in Formula~\eqref{eq: 3.1}, respectively. Since $B_{j0},B_{j1},j=1,2,\ldots,s$, are polynomials of the matrix $A$, we have $\pmb{D}\pmb{N}=\pmb{N}\pmb{D}$.\\
On the other hand, it is clear that $B_{j0}B_{i0}=B_{i0}B_{j0}=0, i\neq j$, is a particular case of Formula~\eqref{eq: 2.21}. Multiplying both sides
of~\eqref{eq: 3.8} by $B_{j0}$, we get $B_{j0}^{2}=B_{j0},j=1,2,\ldots,s$.\\
Furthermore, the matrix $B_{j0}$ is diagonalizable, then so is $\alpha_{j}B_{j0}$. The fact that $\alpha_{j}B_{j0}$ and $\alpha_{i}B_{i0}$ commute assures then that $\pmb{D}$ is diagonalizable.\\
To complete the proof, it remains to show that the matrix $\pmb{N}$ is nilpotent. To see this, it suffices to verify, using Formula~\eqref{eq: 1.1}, that each $B_{j1}$ is nilpotent.
\end{proof}
\begin{remarks}~
\begin{enumerate}[1.]
\item We can immediately find that $B_{j0},j=1,2,\ldots,s$, are the spectral projections of $A$ by using a result of~\cite{Mmou}.
\item Formulas~\eqref{eq: 3.5},\eqref{eq: 3.6}, and \eqref{eq: 3.7} can be also shown using the matrix equation~\eqref{eq: xyz}.
\end{enumerate}
\end{remarks}
The following example is an illustration of Theorem~\ref{Thm 3.3}.
\begin{example}
Let us consider the matrix used in Example~\ref{ex 2.12}
\begin{equation*}
A=\begin{pmatrix}
2 &  0 & 1   \\
   0 & 2 &0 \\
   0 & 0 & 3
\end{pmatrix}
\end{equation*}
The Chevalley--Jordan decomposition of this matrix is
\begin{equation*}
A=\pmb{D}+\pmb{N}
\end{equation*}
where
\begin{equation*}
\pmb{D}=2B_{10}+3B_{20}\quad\text{and}\quad \pmb{N}=B_{11}
\end{equation*}
A trivial calculation yields
\begin{align*}
B_{10}&=L_{10}(A)[\chi_{A}]=\begin{pmatrix}
1 &  0 & -1   \\
   0 & 1 &0 \\
   0 & 0 & 0
\end{pmatrix}\\
B_{11}&=L_{11}(A)[\chi_{A}]=\begin{pmatrix}
0 &  0 & 0   \\
   0 & 0 &0 \\
   0 & 0 & 0
\end{pmatrix}\\
B_{20}&=L_{20}(A)[\chi_{A}]=\begin{pmatrix}
0 &  0 & 1   \\
   0 & 0 &0 \\
   0 & 0 & 1
\end{pmatrix}\\
\end{align*}
\end{example}
The following Theorem shows that knowing only the associated matrices $B_{ij}$ of $A$, we can simply provide the minimal polynomial of the matrix $A$ and the positive powers of its Drazin inverse.
\begin{theorem}\label{thm: aaa}
Let $A$ be a matrix and let $\chi_{A}(x)=(x-\alpha_{1})^{m_{1}}(x-\alpha_{2})^{m_{2}}\cdots(x-\alpha_{s})^{m_{s}}$ be its characteristic polynomial with $\alpha_{1}=0$ (possibly $m_{1}=0$). Then
\begin{enumerate}[1.]
\item The index of $\alpha_{i}$ is the greatest integer $j$ such that $B_{ij-1}\neq0$.
\item The index of $\alpha_{i}$ is $1$ if and only if $B_{i1}=0$.
\item $A$ is diagonalizable if and only if $B_{i1}=0, i=1,2,\ldots,s$.
\item For all positive integer $n$, we have
\begin{equation*}
A_{D}^{n}=\sum_{j=0}^{m_{2}-1}\binom{-n}{j}\alpha_{2}^{-n-j}B_{2j}+\cdots+\sum_{j=0}^{m_{s}-1}\binom{-n}{j}\alpha_{s}^{-n-j}B_{sj}
\end{equation*}
where $A_{D}$ denotes the Drazin inverse of $A$.
\end{enumerate}
\end{theorem}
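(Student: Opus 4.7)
The plan is built around one structural identity:
\begin{equation*}
B_{jk_j} = (A - \alpha_j I)^{k_j}\, B_{j0}, \qquad 1 \le j \le s,\ 0 \le k_j \le m_j - 1.
\end{equation*}
I will prove this first by comparing the polynomials $(x-\alpha_j)^{k_j} L_{j0}(x)[\chi_A]$ and $L_{jk_j}(x)[\chi_A]$ via definition~\eqref{eq: 1.1}; after a straightforward reindexing, their difference factors as $(x-\alpha_j)^{m_j} P_j(x) \cdot Q(x) = \chi_A(x)\, Q(x)$ for some polynomial $Q$, so Cayley--Hamilton at $A$ collapses the tail and the identity drops out. With this in hand, the sequence $B_{j0}, B_{j1}, B_{j2}, \ldots$ is nothing but $B_{j0},\, (A-\alpha_j I) B_{j0},\, (A-\alpha_j I)^2 B_{j0}, \ldots$.

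Part 1 is then immediate from Theorem~\ref{Thm 3.3}: since $B_{i0}$ is the spectral projection onto the generalized eigenspace $\mathcal{N}_i = \ker(A-\alpha_i I)^{m_i}$, the index of $\alpha_i$---the nilpotency index of $A-\alpha_i I$ restricted to $\mathcal{N}_i$---is precisely the least $\nu\ge 1$ for which $(A-\alpha_i I)^\nu B_{i0} = 0$, i.e.\ the greatest $j$ with $B_{i,j-1}\neq 0$. Part 2 is the case $\nu=1$, noting that $B_{i0}\ne 0$ whenever $\alpha_i$ is a genuine eigenvalue. Part 3 then follows from the standard characterization that $A$ is diagonalizable if and only if every eigenvalue has index one.

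For part 4, I will split $A_D^n$ across the spectral projections using $I = \sum_{p=1}^{s} B_{p0}$: since $A_D$ annihilates the generalized null-space $\mathcal{N}_1$, one has $A_D^n = \sum_{p=2}^s A_D^n B_{p0}$. On each $\mathcal{N}_p$ with $p\ge 2$, the matrix $A$ restricts to $\alpha_p I + N_p$ with $N_p$ nilpotent of order $\nu_p$, while $A_D$ restricts to the inverse of $A$ there, so the negative-binomial series
\begin{equation*}
(\alpha_p I + N_p)^{-n} = \sum_{j\ge 0} \binom{-n}{j} \alpha_p^{-n-j}\, N_p^{j}
\end{equation*}
is a finite sum. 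Pulling this back to the whole space via the key identity $(A-\alpha_p I)^j B_{p0} = B_{pj}$, and padding the inner sum up to $j = m_p-1$ (the added terms vanish by part 1), yields the advertised closed form.

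The only genuinely technical step I anticipate is the key identity above; once that routine polynomial manipulation is carried out, parts 1--3 are direct corollaries of the spectral decomposition already assembled in Theorem~\ref{Thm 3.3}, and part 4 reduces to the Jordan-block inversion on each generalized eigenspace. A minor edge case worth flagging is $m_1 = 0$, in which $A$ is invertible, $A_D = A^{-1}$, and the formula should be read with an empty $p=1$ block, consistent with the standard expansion of $A^{-n}$ over all eigenvalues.
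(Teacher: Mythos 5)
Your proof is correct, but it takes a genuinely different route from the paper's. The paper disposes of all four claims in a single sentence: it observes that Formula~\eqref{eq: 3.1} exhibits $A^{n}$ as a $\mathcal{P}$-canonical form and then imports the index characterization and the Drazin-inverse formula wholesale from Corollary~3.6 and Theorem~3.9 of the external reference~\cite{Mmou}; nothing is derived on the page. You instead build everything on the identity $B_{jk_{j}}=(A-\alpha_{j}I)^{k_{j}}B_{j0}$, and your derivation of it is sound: by~\eqref{eq: 1.1} the difference $(x-\alpha_{j})^{k_{j}}L_{j0}(x)[\chi_{A}]-L_{jk_{j}}(x)[\chi_{A}]$ consists exactly of the tail terms $i\ge m_{j}-k_{j}$, each carrying a factor $(x-\alpha_{j})^{k_{j}+i}$ with $k_{j}+i\ge m_{j}$, so the difference is divisible by $\chi_{A}$ and vanishes at $A$ by Cayley--Hamilton. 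Combined with Theorem~\ref{Thm 3.3} (the $B_{i0}$ are the spectral projections), parts 1--3 correctly reduce to the nilpotency index of $A-\alpha_{i}I$ on each generalized eigenspace, and part 4 to the core--nilpotent description of $A_{D}$ (zero on the generalized nullspace of $A$, the local inverse on the complementary invariant subspace) together with the finite negative-binomial expansion of $(\alpha_{p}I+N_{p})^{-n}$ and the translation $N_{p}^{j}B_{p0}=B_{pj}$. What your version buys is self-containedness and an explicit structural fact about the $B_{jk_{j}}$ that the paper never states; what it costs is that the core--nilpotent characterization of the Drazin inverse must be taken as known (the paper's citation plays the analogous role), and you should spell out the degenerate conventions you only gesture at, namely that $B_{i1}=L_{i1}(A)[\chi_{A}]=0$ by the empty-sum convention when $m_{i}=1$ in part 2, and the empty $\alpha_{1}$-block when $m_{1}=0$.
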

\begin{proof}
Clearly formula~\eqref{eq: 3.1} is a $\mathcal{P}$-canonical form of $A$ (see~\cite{Mmou}) and the result follows then from Corollary~$3.6.$ and Theorem~$3.9.$ of~\cite{Mmou} and Theorem~\ref{Thm 3.1} above.
\end{proof}
\begin{remark}
Let $r_{j}$ be the index of $\alpha_{j}$. In view of Property $1.$ of Theorem~\ref{thm: aaa}, Formula~\eqref{eq: 2.20} becomes
\begin{equation*}
\e^{tA}=\sum_{j=1}^{s}\sum_{k_{j}=0}^{r_{j}-1}\frac{t^{k_{j}}}{k_{j}!}\e^{\alpha_{j}t}B_{jk_{j}}
\end{equation*}
\end{remark}
To constitute an illustration of Theorem~\ref{thm: aaa}, let us consider the same example given in~\cite{Lzhang}
\begin{example}
Let us determine the Drazin inverse and its powers of the following matrix
$$A=\begin{pmatrix}
2&0&0\\-1&1&1\\-1&-1&-1
\end{pmatrix}$$
The characteristic polynomial of $A$ is $\chi_{A}(x)=x^{2}(x-2)$.\\
Using the formula of Theorem \ref{thm: aaa}, we obtain for all $n\geq 1$
\begin{equation*}
A_{D}^{n}=\binom{-n}{0}2^{-n}B_{20}
\end{equation*}
On the other hand, we have
\begin{equation*}
B_{20}=\frac{1}{4}A^{2}
\end{equation*}
Simple calculation gives
$$
B_{20}=\begin{pmatrix}
1&0&0\\-1&0&0\\0&0&0
\end{pmatrix}$$
Therefore
\begin{equation*}
A_{D}^{n}=\begin{pmatrix}
2^{-n}&0&0\\-2^{-n}&0&0\\0&0&0
\end{pmatrix}, \,\,\ n\geq 1
\end{equation*}
for $n=1$ the Drazin inverse of the matrix $A$ is
\begin{equation*}
\renewcommand{\arraystretch}{1.4}
A_{D}=\begin{pmatrix}
\frac{1}{2}&0&0\\ -\frac{1}{2}&0&0\\0&0&0
\end{pmatrix}
\end{equation*}
which is in agreement with the result in~\cite{Lzhang}.
\end{example}
The following example appears in~\cite{Eehartwig} it is used here to illustrate our result on the Drazin inverse
\begin{example}
Let $$A=\begin{pmatrix}
1&-1&1&1\\0&1&-1&1\\1&-1&1&2 \\1&-1&1&1
\end{pmatrix}$$
be a matrix with characteristic polynomial $\chi_{A}(x)=x^{2}(x-(2+\sqrt{2}))(x-(2-\sqrt{2}))$.\\
Using the formula of Theorem~\ref{thm: aaa}, we obtain for all $n\geq 1$
\begin{equation*}
A_{D}^{n}=\binom{-n}{0}(2+\sqrt{2})^{-n}B_{20}+\binom{-n}{0}(2-\sqrt{2})^{-n}B_{30}
\end{equation*}
On the other hand, we have
$$
\left \{
\begin{array}{rcl}
B_{20}&=&\frac{3\sqrt{2}-4}{8}A^{2}(A-(2-\sqrt{2})I) \vspace*{0.5pc}\\
B_{30}&=&\frac{3\sqrt{2}+4}{-8}A^{2}(A-(2+\sqrt{2})I)
\end{array}
\right.
$$
Simple calculation gives
\renewcommand{\arraystretch}{1.5}
$$
B_{20}=\begin{pmatrix}
\frac{1}{4}&\frac{-\sqrt{2}}{4}&\frac{\sqrt{2}}{4}&\frac{1}{4}\\\frac{4-3\sqrt{2}}{8}&\frac{3-2\sqrt{2}}{4}&\frac{2\sqrt{2}-3}{4}&\frac{4-3\sqrt{2}}{8}\\ \frac{4-\sqrt{2}}{8}&\frac{1-2\sqrt{2}}{4}&\frac{2\sqrt{2}-1}{4}&\frac{4-\sqrt{2}}{8}\\ \frac{1}{4}&\frac{-\sqrt{2}}{4}&\frac{\sqrt{2}}{4}&\frac{1}{4}
\end{pmatrix}$$
and
$$
B_{30}=\begin{pmatrix}
\frac{1}{4}&\frac{\sqrt{2}}{4}&\frac{-\sqrt{2}}{4}&\frac{1}{4}\\\frac{3\sqrt{2}+4}{8}&\frac{3+2\sqrt{2}}{4}&\frac{-3-2\sqrt{2}}{4}&\frac{3\sqrt{2}+4}{8}\\ \frac{4+\sqrt{2}}{8}&\frac{1+2\sqrt{2}}{4}&\frac{-2\sqrt{2}-1}{4}&\frac{4+\sqrt{2}}{8}\\ \frac{1}{4}&\frac{\sqrt{2}}{4}&\frac{-\sqrt{2}}{4}&\frac{1}{4}
\end{pmatrix}$$
It follows that
\begin{equation*}
A_{D}^{n}=(2+\sqrt{2})^{-n}B_{20}+(2-\sqrt{2})^{-n}B_{30}, \,\,\ n\geq 1
\end{equation*}
for $n=1$ the Drazin inverse of the matrix $A$ is
\renewcommand{\arraystretch}{1.2}
\begin{equation*}
A_{D}=\frac{1}{4}\begin{pmatrix}
2&2&-2&2\\ 7&10&-10&7\\ 5&6&-6&5\\ 2&2&-2&2
\end{pmatrix}
\end{equation*}
\end{example}
\section{Explicit formulas for logarithms of matrices}
\label{sec:4}
In this section, we derive some explicit and elegant formulas for logarithms of matrices with the aid of the results of Section~\ref{sec:3} and Theorem $4.2$ of~\cite{Mouc22}.
\begin{theorem}\label{Thhh}
Let $A$ be a $k\times k$ nonsingular matrix, and let $\chi_{A}(x)=(x-\alpha_{1})^{m_{1}}(x-\alpha_{2})^{m_{2}}\cdots(x-\alpha_{s})^{m_{s}}$ be its characteristic polynomial where $\alpha_{1}=\e^{\beta_{1}},\ldots,\alpha_{s}=\e^{\beta_{s}}$. Then the matrix
\begin{equation}\label{eq: ffft}
\sum_{p=1}^{s}\beta_{p}B_{p0}+\sum_{p=1}^{s}\sum_{j=1}^{m_{p}-1}\Big(\frac{(-1)^{j-1}}{j\alpha_{p}^{j}}\Big)B_{pj},
\end{equation}
where $B_{jk_{j}}=L_{jk_{j}}(A)[\chi_{A}]$, is a logarithm of $A$.
\end{theorem}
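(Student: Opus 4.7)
The plan is to compute $e^{L}$ directly, where $L$ denotes the matrix in~\eqref{eq: ffft}, and show it equals $A$. First I would write $L=\sum_{p=1}^{s}L_{p}$ with $L_{p}=\beta_{p}B_{p0}+\sum_{j=1}^{m_{p}-1}\frac{(-1)^{j-1}}{j\alpha_{p}^{j}}B_{pj}$. By Theorem~\ref{Thm 2.7}, the mutual annihilation $B_{pj}B_{qk}=0$ for $p\neq q$ gives $L_{p}L_{q}=0$, so $L^{n}=\sum_{p}L_{p}^{n}$ for every $n\geq 1$, which yields
$$e^{L} = I + \sum_{p=1}^{s}\bigl(e^{L_{p}}-I\bigr).$$

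Next I would evaluate each $e^{L_{p}}$ using two preliminary identities. Multiplying $I=\sum_{q}B_{q0}$ (formula~\eqref{eq: 3.8}) by $B_{pj}$ and invoking the orthogonality from Theorem~\ref{Thm 2.7} yields $B_{p0}B_{pj}=B_{pj}B_{p0}=B_{pj}$. Applying Theorem~\ref{Thm 3.1} at $n=1$ gives $A=\sum_{p}(\alpha_{p}B_{p0}+B_{p1})$; multiplying by $B_{p0}$ and comparing with the formula for $A^{n}B_{p0}=\sum_{j}\binom{n}{j}\alpha_{p}^{n-j}B_{pj}$ at $n=2,3,\ldots$, one obtains inductively that $B_{pj}=B_{p1}^{j}$ for every $j\geq 1$. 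In particular, $B_{p1}^{m_{p}}=0$, so $X_{p}:=B_{p1}/\alpha_{p}$ is nilpotent (well defined because $A$ is nonsingular). Setting $N_{p}:=L_{p}-\beta_{p}B_{p0}=\sum_{j\geq 1}\frac{(-1)^{j-1}}{j}X_{p}^{j}$, the formal identity $\exp(\log(I+X))=I+X$, valid as a polynomial identity for nilpotent $X$, gives $e^{N_{p}}=I+X_{p}=I+B_{p1}/\alpha_{p}$. Since $B_{p0}^{2}=B_{p0}$, summing the exponential series yields $e^{\beta_{p}B_{p0}}=I+(e^{\beta_{p}}-1)B_{p0}=I+(\alpha_{p}-1)B_{p0}$. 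Because $\beta_{p}B_{p0}$ and $N_{p}$ commute (both are polynomials in $A$),
$$e^{L_{p}}=\bigl(I+(\alpha_{p}-1)B_{p0}\bigr)\bigl(I+B_{p1}/\alpha_{p}\bigr)=I+(\alpha_{p}-1)B_{p0}+B_{p1},$$
where the last equality uses $B_{p0}B_{p1}=B_{p1}$ and the fact that $(\alpha_{p}-1)\cdot B_{p1}/\alpha_{p}+B_{p1}/\alpha_{p}=B_{p1}$.

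To conclude, I would substitute back and apply $\sum_{p}B_{p0}=I$ along with Theorem~\ref{Thm 3.1} at $n=1$:
$$e^{L}=I+\sum_{p=1}^{s}\bigl[(\alpha_{p}-1)B_{p0}+B_{p1}\bigr]=\sum_{p=1}^{s}\alpha_{p}B_{p0}+\sum_{p=1}^{s}B_{p1}=A.$$
The only subtle points are the identification $B_{pj}=B_{p1}^{j}$ for $j\geq 1$ and the truncated $\exp\circ\log$ identity for nilpotent matrices; once these are in hand, the remainder is algebraic manipulation with the orthogonal and idempotent spectral data already produced by Theorem~\ref{Thm 2.7} and Theorem~\ref{Thm 3.3}.
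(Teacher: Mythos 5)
Your proof is correct, and it takes a genuinely different route from the paper's. The paper's own argument substitutes a continuous variable $t$ for $n$ in the power formula of Theorem~\ref{Thm 3.1}, observes that the derivative of $\binom{t}{j}\e^{\beta_p(t-j)}$ at $t=0$ produces exactly the coefficients in \eqref{eq: ffft}, and then invokes Theorem~4.2 of the external reference \cite{Mouc22} as a black box to certify that $A'(0)$ is a logarithm of $A$; the verification that $\exp$ of this matrix equals $A$ is never carried out inside the paper. You instead prove $\e^{L}=A$ directly, using only facts already established here: the orthogonality \eqref{eq: 2.21} to split $\e^{L}=I+\sum_{p}(\e^{L_{p}}-I)$, the resolution of the identity \eqref{eq: 3.8} to get $B_{p0}B_{pj}=B_{pj}$, and the Chevalley--Jordan data of Theorem~\ref{Thm 3.3} to reduce each block to the classical computation $\e^{\beta_p B_{p0}}\e^{\log(I+B_{p1}/\alpha_p)}=I+(\alpha_p-1)B_{p0}+B_{p1}$. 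This buys self-containedness at the cost of two lemmas you rightly flag. Both are sound: from $AB_{p0}=\alpha_pB_{p0}+B_{p1}$ one gets $A^{j}B_{p0}=(\alpha_pB_{p0}+B_{p1})^{j}$, and comparing this binomial expansion with Theorem~\ref{Thm 3.1} multiplied by $B_{p0}$ gives $B_{pj}=B_{p1}^{j}$ by induction on $j$ and, for $j\geq m_p$, the nilpotency $B_{p1}^{m_p}=0$ (alternatively, $(x-\alpha_p)^{m_p}L_{p0}(x)[\chi_A]$ is divisible by $\chi_A$); and the truncated identity $\exp(\log(I+X))=I+X$ for nilpotent $X$ is a legitimate formal-power-series fact since all discarded terms vanish. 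It would be worth writing out the induction for $B_{pj}=B_{p1}^{j}$ explicitly, since it is the one step where "inductively" is doing real work, but nothing in the argument fails.
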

\begin{proof}
By Theorem~\ref{Thm 3.1}, we have for all $n\in \mathbb{N}$
\begin{equation*}
A^{n}=\sum_{j=0}^{m_{1}-1}\binom{n}{j}\e^{\beta_{1}(n-j)}B_{1j}+\cdots+\sum_{j=0}^{m_{s}-1}\binom{n}{j}\e^{\beta_{s}(n-j)}B_{sj}
\end{equation*}
If we plug $t$ into this formula for $n$, we obtain the following smooth matrix function
\begin{equation*}
A(t)=\sum_{j=0}^{m_{1}-1}\binom{t}{j}\e^{\beta_{1}(t-j)}B_{1j}+\cdots+\sum_{j=0}^{m_{s}-1}\binom{t}{j}\e^{\beta_{s}(t-j)}B_{sj}.
\end{equation*}
Since
\begin{equation*}
\binom{t}{j}=
\frac{t(t-1)\cdots(t-j+1)}{j!},
\end{equation*}
for all positive integer $j$,
the derivative at $0$ of the function $\binom{t}{j}$ is
\begin{equation*}
\binom{t}{j}^\prime(0)=\dfrac{(-1)^{j-1}}{j}.
\end{equation*}
Consequently, using the convention that $\binom{t}{0}=1$, we obtain
\begin{equation*}
A'(0)=\sum_{p=1}^{s}\beta_{p}B_{p0}+\sum_{p=1}^{s}\sum_{j=1}^{m_{p}-1}\dfrac{(-1)^{j-1}}{j\alpha_{p}^{j}}B_{pj}.
\end{equation*}
Then the conclusion follows from Theorem~$4.2$ of~\cite{Mouc22}.
\end{proof}
As an illustration of Theorem~\ref{Thhh}, consider the following example.
\begin{example}
Let $$A=\begin{pmatrix}
3&0&0\\2&3&0\\1&4&2
\end{pmatrix}$$
The characteristic polynomial of $A$ is $\chi_{A}(x)=(x-3)^{2}(x-2)$.\\
Applying Formula~\eqref{eq: ffft}, we obtain that
\begin{equation*}
C=\ln(3)B_{10}+\ln(2)B_{20}+\frac{1}{3}B_{11}
\end{equation*}
is the principal logarithm of $A$, where
$$
\left \{
\begin{array}{rcl}
B_{1j}&=&(A-3I)^{j}(A-2I)\displaystyle\sum_{i=0}^{1-j}(-1)^{i}(A-3I)^{i}, j=0,1\\
B_{20}&=&(A-3I)^{2}
\end{array}
\right.
$$
Simple calculation gives
$$
\left \{
\begin{array}{rcl}
B_{10}&=&-A^{2}+6A-8I\\
B_{11}&=&A^{2}-5A+6I\\
B_{20}&=&A^{2}-6A+9I\\
\end{array}
\right.
$$
Finally, we obtain
$$C=\begin{pmatrix}
\ln(3)&0&0\vspace*{0.3pc}\\\frac{2}{3}&\ln(3)&0\vspace*{0.3pc}\\7\ln(\frac{2}{3})+\frac{8}{3}&4\ln(\frac{3}{2})&\ln(2)
\end{pmatrix}$$
\end{example}
\begin{corollary}
If $A$ is a nonsingular matrix and  $\chi_{A}(x)=(x-\alpha_{1})(x-\alpha_{2})\cdots(x-\alpha_{k})$ its characteristic polynomial with distinct roots $\alpha_{1}=\e^{\beta_{1}},\ldots, \alpha_{k}=\e^{\beta_{k}}$, then the matrix
\begin{equation*}
\sum_{j=1}^{k}\beta_{j}B_{j},
\end{equation*}
where $B_{j}=\prod\limits_{i=1,i\neq j}^{k}\dfrac{1}{\alpha_{j}-\alpha_{i}}(A-\alpha_{i}I)$, is a logarithm of $A$.
\end{corollary}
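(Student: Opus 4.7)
The plan is to derive this corollary as a direct specialization of Theorem~\ref{Thhh} to the case where all multiplicities in the characteristic polynomial equal one, i.e., $m_1 = m_2 = \cdots = m_k = 1$ and $s = k$. Since the inner sum in Formula~\eqref{eq: ffft} runs from $j = 1$ to $m_p - 1 = 0$, it is empty for every $p$, and the proposed logarithm collapses to $\sum_{j=1}^{k} \beta_j B_{j0}$.

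The remaining task is to identify $B_{j0} = L_{j0}(A)[\chi_A]$ with the stated product expression. First I would invoke Formula~\eqref{eq: 1.1} with $k_j = 0$ and $m_j = 1$: the inner sum over $i$ has only the $i = 0$ term, so
\begin{equation*}
L_{j0}(x)[\chi_A] = P_j(x)\, g_j(\alpha_j) = \frac{P_j(x)}{P_j(\alpha_j)} = \prod_{i=1,\, i \neq j}^{k} \frac{x - \alpha_i}{\alpha_j - \alpha_i}.
\end{equation*}
Substituting $A$ for $x$ (which is legitimate since $L_{j0}(x)[\chi_A]$ is a polynomial and matrix polynomial evaluation respects products over the commuting factors $A - \alpha_i I$) yields exactly the matrix $B_j$ in the statement of the corollary.

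Combining these two observations, Theorem~\ref{Thhh} gives that $\sum_{j=1}^{k}\beta_j B_j$ is a logarithm of $A$, which is the desired conclusion. I anticipate no real obstacle here: the entire argument is a bookkeeping specialization, and the only point requiring any care is noting that the second double sum in~\eqref{eq: ffft} disappears when every $m_p = 1$, together with the identification of $L_{j0}(A)[\chi_A]$ with the Lagrange-style product via Formula~\eqref{eq: 1.1}.
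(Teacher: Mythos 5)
Your proposal is correct and follows exactly the paper's route: the paper also proves this corollary by specializing Theorem~\ref{Thhh} to the case of simple roots and identifying $B_{j0}=L_{j0}(A)[\chi_A]$ with the Lagrange-style product $\prod_{i\neq j}(A-\alpha_i I)/(\alpha_j-\alpha_i)$ via Formula~\eqref{eq: 1.1}. Your write-up merely spells out the bookkeeping (the vanishing of the second double sum when every $m_p=1$) that the paper leaves implicit.
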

\begin{proof}
Is a direct consequence of Theorem~\ref{Thhh} and the fact that in the present case, $$B_{j0}=\prod_{i=1,i\neq j}^{k}\dfrac{1}{\alpha_{j}-\alpha_{i}}(A-\alpha_{i}I).$$
\end{proof}
\begin{example}
To illustrate the previous corollary, we consider the same matrix that in~\cite{Jamarrero}
\renewcommand{\arraystretch}{1.4}
$$A=\begin{pmatrix}
0&\frac{5}{16}&\frac{-1}{32}\\ -1&1&0\\0&0&1
\end{pmatrix}$$
The characteristic polynomial of $A$ is $\chi_{A}(x)=(x-1)(x-(\frac{1}{2}+\frac{1}{4}i))(x-(\frac{1}{2}-\frac{1}{4}i))$.\\
Let $\alpha_{1}=1,\alpha_{2}=\frac{1}{2}+\frac{1}{4}i$ and $\alpha_{3}=\frac{1}{2}-\frac{1}{4}i$. It is clear that the eigenvalues of $A$ are all not in $\mathbb{R}^{-}$, then the principal logarithm of $A$ exists. To obtain this logarithm we must take the principal logarithm of $\alpha_{1},\alpha_{2}$ and $\alpha_{3}$ which are $0,ln(\frac{\sqrt{5}}{4})+i\arctan(\frac{1}{2})$ and $ln(\frac{\sqrt{5}}{4})-i\arctan(\frac{1}{2})$ respectively. Applying the previous result, we obtain that
\begin{equation}\label{ex rachidi}
B=\beta_{2}B_{2}+\beta_{3}B_{3}
\end{equation}
is the principal logarithm of $A$, where
$$
\left \{
\begin{array}{rcl}
B_{2}&=&\dfrac{1}{(\alpha_{2}-\alpha_{1})(\alpha_{2}-\alpha_{3})}(A-\alpha_{1}I)(A-\alpha_{3}I)\vspace*{0.5pc}\\
B_{3}&=&\dfrac{1}{(\alpha_{3}-\alpha_{1})(\alpha_{3}-\alpha_{2})}(A-\alpha_{1}I)(A-\alpha_{2}I)\vspace*{0.5pc}\\
\beta_{2}&=&ln(\frac{\sqrt{5}}{4})+i\arctan(\frac{1}{2})\vspace*{0.5pc}\\
\beta_{3}&=&ln(\frac{\sqrt{5}}{4})-i\arctan(\frac{1}{2})
\end{array}
\right.
$$
Simple calculation gives
$$
\left \{
\begin{array}{rcl}
B_{2}&=&\begin{pmatrix}
\frac{1}{2}+i&\frac{-5}{8}i&\frac{i}{16}\\ 2i&\frac{1}{2}-i&\frac{-1}{20}+\frac{i}{10}\\0&0&0
\end{pmatrix}\vspace*{0.5pc}\\
B_{3}&=&\begin{pmatrix}
\frac{1}{2}-i&\frac{5}{8}i&-\frac{i}{16}\\ -2i&\frac{1}{2}+i&\frac{-1}{20}+\frac{-i}{10}\\0&0&0
\end{pmatrix}
\end{array}
\right.
$$
Substituting $B_{2},B_{3},\beta_{2}$ and $\beta_{3}$ by their values in~\eqref{ex rachidi}, we obtain
$$B=ln\big(\frac{\sqrt{5}}{4}\big)\begin{pmatrix}
1&0&0\\0&1&\frac{-1}{10}\\0&0&0
\end{pmatrix}+\arctan\big(\frac{1}{2}\big)\begin{pmatrix}
-2&\frac{5}{4}&\frac{-1}{8}\\ -4&2&\frac{-1}{5}\\0&0&0
\end{pmatrix}$$
\end{example}
\begin{remark}
Let $A$ be a nonsingular matrix of order $2$ and $\alpha_{1},\alpha_{2}$ its eigenvalues.
\begin{enumerate}
\item[i)] If $\alpha_{1}\neq \alpha_{2}$ then $log(A)=\dfrac{log(\alpha_{1})-log(\alpha_{2})}{\alpha_{1}-\alpha_{2}}A+\dfrac{\alpha_{1}log(\alpha_{2})-
    \alpha_{2}log(\alpha_{1})}{\alpha_{1}-\alpha_{2}}I$.\vspace*{0.5pc}\\
\item[ii)] If $\alpha_{1}=\alpha_{2}=\alpha$ then $log(A)=\dfrac{1}{\alpha}A+(log(\alpha)-1)I$.
\end{enumerate}
\end{remark}
\begin{corollary}
If $A$ is a nonsingular matrix with characteristic polynomial $\chi_{A}(x)=(x-\alpha)^{k}$, $\alpha=\e^{\beta}$, then the matrix
\begin{equation*}
\beta B_{0}+\sum_{j=1}^{k-1}\frac{(-1)^{j-1}}{j\alpha^{j}}B_{j},
\end{equation*}
where $B_{j}=(A-\alpha I)^{j}$, is a logarithm of $A$.
\end{corollary}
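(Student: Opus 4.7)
The plan is to specialize Theorem~\ref{Thhh} to the single-eigenvalue case $s=1$. Setting $\alpha_1 = \alpha$, $\beta_1 = \beta$, and $m_1 = k$, Theorem~\ref{Thhh} immediately yields a logarithm of $A$ of the form
\begin{equation*}
\beta B_{10} + \sum_{j=1}^{k-1} \frac{(-1)^{j-1}}{j\alpha^{j}} B_{1j},
\end{equation*}
where $B_{1j} = L_{1j}(A)[\chi_{A}]$. So the only task remaining is to identify these matrices with $(A - \alpha I)^{j}$.

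To do this, I would return to the definition in~\eqref{eq: 1.1} and observe that, when $s=1$, the polynomial $P_{1}(x) = \prod_{i \neq 1}(x - \alpha_{i})^{m_{i}}$ is an empty product and hence equals $1$. Consequently $g_{1}(x) = (P_{1}(x))^{-1} \equiv 1$, so that $g_{1}(\alpha) = 1$ and $g_{1}^{(i)}(\alpha) = 0$ for all $i \geq 1$. Substituting this into~\eqref{eq: 1.1} collapses the inner sum to a single term, yielding
\begin{equation*}
L_{1j}(x)[\chi_{A}] = (x - \alpha)^{j}.
\end{equation*}
Evaluating at $x = A$ gives $B_{1j} = (A - \alpha I)^{j} = B_{j}$, which matches the notation of the corollary (including the case $j=0$, where $B_{0} = I$).

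Plugging these values back into the expression coming from Theorem~\ref{Thhh} produces exactly
\begin{equation*}
\beta B_{0} + \sum_{j=1}^{k-1} \frac{(-1)^{j-1}}{j\alpha^{j}} B_{j},
\end{equation*}
finishing the proof. There is no genuine obstacle here: the entire content is already contained in Theorem~\ref{Thhh}, and the corollary just records the simplification that occurs when all eigenvalues coincide, so that the Hermite-type polynomials $L_{1j}(x)[\chi_{A}]$ degenerate to the pure powers $(x-\alpha)^{j}$.
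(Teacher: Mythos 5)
Your proposal is correct and follows essentially the same route as the paper, which simply states that the corollary follows immediately from Theorem~\ref{Thhh}. Your explicit verification that $P_{1}\equiv 1$ and $g_{1}\equiv 1$ force $L_{1j}(x)[\chi_{A}]=(x-\alpha)^{j}$ is exactly the routine specialization the paper leaves implicit.
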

\begin{proof}
Follows immediately from Theorem~\ref{Thhh}.
\end{proof}
\begin{example}
For any complex number $a$, let us calculate the logarithm of the following matrix
\begin{equation*}
A(a)= \begin{pmatrix}
    1 &  0 &\cdots & 0   \\
    a & 1 & \cdots &  0 \\
     a^{2} &2a & \cdots & 0 \\
      a^{3} &3a^{2} & \cdots & 0 \\
     \binom{4}{0}a^{4} &\binom{4}{1}a^{3} & \cdots & 0 \\
    \vdots & \vdots & \cdots & \vdots \\
    \binom{k-1}{0}a^{k-1} & \binom{k-1}{1}a^{k-2}  & \cdots & 1
\end{pmatrix}
\end{equation*}
The characteristic polynomial of $A(a)$ is $\chi_{A(a)}(x)=(x-1)^{k}$.\\
In this example we have $\alpha=\e^{0}$. Applying the previous result, we obtain that
\begin{equation*}
B(a)=\sum_{j=1}^{k-1}\frac{(-1)^{j-1}}{j}(A(a)-I)^{j}
\end{equation*}
is the principal logarithm of $A(a)$. We have
\begin{equation*}
(A(a)-I)^{j}=\sum_{i=0}^{j}\binom{j}{i}(-1)^{j-i}A(a)^{i}
\end{equation*}
It is easily seen that
\begin{equation*}
A(a)^{i}=A(ia)
\end{equation*}
Thus
\begin{equation*}
(A(a)-I)^{j}=\sum_{i=0}^{j}\binom{j}{i}(-1)^{j-i}A(ia)
\end{equation*}
The $(l,q)$-th entry of the matrix $A(ia)$ is
\[  (A(ia))_{lq}= \left\{ \begin{array}{ll}
         \binom{l-1}{q-1}(ia)^{l-q} & \mbox{if $l \geq q$}\\
        0, & \mbox{otherwise },\end{array} \right. \]
Therefore
\[  (A(a)-I)^{j})_{lq}= \left\{ \begin{array}{ll}
         \binom{l-1}{q-1}a^{l-q}\sum_{i=0}^{j}\binom{j}{i}(-1)^{j-i}i^{l-q} & \mbox{if $l \geq q$}\\
        0, & \mbox{otherwise },\end{array} \right. \]
It is easy to show that if $l \geq q$, then
\[  \sum_{i=0}^{j}\binom{j}{i}(-1)^{j-i}i^{l-q}= \left\{ \begin{array}{ll}
        qa  & \mbox{if $l=q+1$}\\
        0, & \mbox{otherwise },\end{array} \right. \]
so the principal logarithm of $A(a)$ is
$$B(a)=\begin{pmatrix}
  0  & 0  & 0 & 0 & \cdots & 0 \\
  a & 0 & 0 & 0 & \cdots & 0 \\
  0 & 2a & 0 & 0 & \cdots & 0\\
  0 & 0 & 3a & 0 & \cdots & 0 \\
  \vdots & \vdots & \ddots & \ddots & \ddots & \vdots \\
  0 & 0 & \cdots & 0& (k-1)a & 0 \\
 \end{pmatrix}$$
\end{example}
The following interesting corollaries are consequence of Theorem~\ref{Thhh}.
\begin{corollary}
If $\chi_{A}(x)=(x-\alpha_{1})^{m_{1}}(x-\alpha_{2})^{m_{2}}$, $\alpha_{1}=\e^{\beta_{1}}$ and $\alpha_{2}=\e^{\beta_{2}}$ are nonzero two distinct complex numbers, then the matrix
\begin{equation*}
\beta_{1}B_{10}+\beta_{2}B_{20}+\sum_{j=1}^{m_{1}-1}\frac{(-1)^{j-1}}{j\alpha_{1}^{j}}B_{1j}+\sum_{j=1}^{m_{2}-1}\frac{(-1)^{j-1}}{j\alpha_{2}^{j}}B_{2j}
\end{equation*}
is a logarithm of $A$, where
$$ \left \{
\begin{array}{rcl}
B_{1j}&=&(A-\alpha_{1}I)^{j}(A-\alpha_{2}I)^{m_{2}}\displaystyle \sum_{i=0}^{m_{1}-j-1}\frac{(-1)^{i}\binom{m_{2}+i-1}{m_{2}-1}}{(\alpha_{1}-\alpha_{2})^{m_{2}+i}}(A-\alpha_{1}I)^{i}\\
B_{2j}&=&(A-\alpha_{1}I)^{m_{1}}(A-\alpha_{2}I)^{j}\displaystyle \sum_{i=0}^{m_{2}-j-1}\frac{(-1)^{i}\binom{m_{1}+i-1}{m_{1}-1}}{(\alpha_{2}-\alpha_{1})^{m_{1}+i}}(A-\alpha_{2}I)^{i}
\end{array}
\right.
$$
\end{corollary}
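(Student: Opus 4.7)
The plan is to deduce this corollary as a direct specialization of Theorem~\ref{Thhh} to the case $s=2$. Writing out \eqref{eq: ffft} with just two eigenvalues immediately produces the four-term expression displayed in the statement, so the only substantive work is to identify the polynomials $L_{1j}(x)[\chi_A]$ and $L_{2j}(x)[\chi_A]$ in closed form and then substitute $A$ for $x$ to obtain the claimed formulas for $B_{1j}$ and $B_{2j}$.

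First I would unwind the definition \eqref{eq: 1.1}. For $s=2$ one has $P_1(x) = (x-\alpha_2)^{m_2}$ and $g_1(x) = (x-\alpha_2)^{-m_2}$, so a routine induction on $i$ gives
\begin{equation*}
g_1^{(i)}(x) = (-1)^i \frac{(m_2+i-1)!}{(m_2-1)!}(x-\alpha_2)^{-m_2-i},
\end{equation*}
and therefore
\begin{equation*}
\frac{1}{i!}g_1^{(i)}(\alpha_1) = (-1)^i\binom{m_2+i-1}{m_2-1}(\alpha_1-\alpha_2)^{-m_2-i}.
\end{equation*}
Plugging this into \eqref{eq: 1.1} yields
\begin{equation*}
L_{1j}(x)[\chi_A] = (x-\alpha_2)^{m_2}(x-\alpha_1)^j\sum_{i=0}^{m_1-j-1}\frac{(-1)^i\binom{m_2+i-1}{m_2-1}}{(\alpha_1-\alpha_2)^{m_2+i}}(x-\alpha_1)^i,
\end{equation*}
and the analogous derivation with the roles of $\alpha_1$ and $\alpha_2$ interchanged gives the corresponding expression for $L_{2j}(x)[\chi_A]$.

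Next I would invoke $B_{ij} = L_{ij}(A)[\chi_A]$ (which is the definition used in Theorem~\ref{Thhh}) to replace $x$ by $A$ in the two identities above. Since all factors involved are polynomials in $A$, they commute freely and the expressions for $B_{1j}$ and $B_{2j}$ stated in the corollary follow without any reordering. Finally, substituting these $B_{1j}$ and $B_{2j}$ into \eqref{eq: ffft} with $s=2$ gives exactly the claimed logarithm of $A$.

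No step here is genuinely difficult; the only place where a slip is possible is the bookkeeping for $g_1^{(i)}$, in particular keeping the sign $(-1)^i$ and the factorial ratio $(m_2+i-1)!/(m_2-1)!$ consistent with the convention $\binom{m_2+i-1}{m_2-1}$. Once that binomial identification is in place, the rest is a matter of transcription from the general formula in Theorem~\ref{Thhh}.
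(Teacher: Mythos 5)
Your proposal is correct and follows essentially the same route as the paper, which states this corollary as a direct consequence of Theorem~\ref{Thhh} without further argument; the closed forms you derive for $L_{1j}(x)[\chi_A]$ and $L_{2j}(x)[\chi_A]$ are exactly the ones the paper records in the proof of the analogous two-eigenvalue corollary for $\e^{tA}$ in Section~\ref{sec:2}. Your bookkeeping for $g_1^{(i)}$ and the binomial coefficient is accurate, so nothing is missing.
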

\begin{corollary}
If $\chi_{A}(x)=(x-\alpha_{1})^{m_{1}}(x-\alpha_{2})$, $\alpha_{1}=\e^{\beta_{1}}$ and $\alpha_{2}=\e^{\beta_{2}}$ are nonzero two distinct
complex numbers, then the matrix
\begin{equation*}
\beta_{1}B_{10}+\beta_{2}B_{20}+\sum_{j=1}^{m_{1}-1}\frac{(-1)^{j-1}}{j\alpha_{1}^{j}}B_{1j}
\end{equation*}
is a logarithm of $A$, where
$$
\left \{
\begin{array}{rcl}
B_{1j}&=&(A-\alpha_{1}I)^{j}(A-\alpha_{2}I)\displaystyle\sum_{i=0}^{m_{1}-j-1}\frac{(-1)^{i}}{(\alpha_{1}-\alpha_{2})^{i+1}}(A-\alpha_{1}I)^{i}\\
B_{20}&=&\displaystyle\frac{1}{(\alpha_{2}-\alpha_{1})^{m_{1}}}(A-\alpha_{1}I)^{m_{1}}
\end{array}
\right.
$$
\end{corollary}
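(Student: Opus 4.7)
The plan is to obtain the result as a direct specialization of Theorem~\ref{Thhh} to the characteristic polynomial $\chi_A(x)=(x-\alpha_1)^{m_1}(x-\alpha_2)$, computing the matrices $B_{ik_i}=L_{ik_i}(A)[\chi_A]$ explicitly via Formula~\eqref{eq: 1.1}. Equivalently, one may view this as the $m_2=1$ specialization of the immediately preceding corollary; in either viewpoint, the entire argument is bookkeeping around two small residue computations.

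First, I would dispatch $B_{20}$. Since $m_2=1$, the only admissible index is $k_2=0$, and the relevant quantities in~\eqref{eq: 1.1} are $P_2(x)=(x-\alpha_1)^{m_1}$ and $g_2(x)=1/(x-\alpha_1)^{m_1}$. The sum in~\eqref{eq: 1.1} collapses to the single term $i=0$, giving $L_{20}(x)[\chi_A]=P_2(x)\,g_2(\alpha_2)=(x-\alpha_1)^{m_1}/(\alpha_2-\alpha_1)^{m_1}$. Substituting $A$ for $x$ produces the stated expression for $B_{20}$.

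Next, I would compute $B_{1j}$ for $0\le j\le m_1-1$. Here $P_1(x)=x-\alpha_2$ and $g_1(x)=1/(x-\alpha_2)$, so a routine differentiation yields $g_1^{(i)}(x)=(-1)^i i!/(x-\alpha_2)^{i+1}$. The factor $1/i!$ appearing in~\eqref{eq: 1.1} cancels the factorial coming from $g_1^{(i)}(\alpha_1)$, leaving
$$L_{1j}(x)[\chi_A]=(x-\alpha_2)(x-\alpha_1)^j\sum_{i=0}^{m_1-1-j}\frac{(-1)^i}{(\alpha_1-\alpha_2)^{i+1}}(x-\alpha_1)^i,$$
and substituting $A$ for $x$ delivers the claimed $B_{1j}$.

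Finally, Formula~\eqref{eq: ffft} of Theorem~\ref{Thhh} expresses a logarithm of $A$ as a sum over the two eigenvalues; the inner sum indexed by $j=1,\dots,m_2-1$ is empty because $m_2=1$, so only the $\beta_2 B_{20}$ term survives from the $\alpha_2$ block. Plugging in the expressions for $B_{1j}$ and $B_{20}$ computed above reproduces the matrix displayed in the corollary. The only mildly non-automatic step is the evaluation $g_1^{(i)}(\alpha_1)/i!=(-1)^i/(\alpha_1-\alpha_2)^{i+1}$, and since this reduces to differentiating a simple pole, there is no substantive obstacle.
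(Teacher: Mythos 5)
Your proposal is correct and follows exactly the route the paper intends: the corollary is stated as an immediate specialization of Theorem~\ref{Thhh}, and your computation of $L_{20}(x)[\chi_A]$ and $L_{1j}(x)[\chi_A]$ from Formula~\eqref{eq: 1.1} (the collapse of the $\alpha_2$ block since $m_2=1$, and the evaluation $g_1^{(i)}(\alpha_1)/i!=(-1)^i/(\alpha_1-\alpha_2)^{i+1}$) is precisely the bookkeeping the paper leaves implicit, mirroring the proof it gives for the analogous exponential corollary.
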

More generally, we have the following result.
\begin{corollary}
If $\chi_{A}(x)=(x-\alpha_{1})^{m_{1}}\prod\limits_{j=2}^{s}(x-\alpha_{j})$, $\alpha_{1}=\e^{\beta_{1}},\ldots, \alpha_{s}=\e^{\beta_{s}}$ are nonzero distinct complex numbers, then the matrix
\begin{equation}\label{eq: 4.7}
\beta_{1}B_{10}+\sum_{j=1}^{m_{1}-1}\frac{(-1)^{j-1}}{j\alpha_{1}^{j}}B_{1j}+\sum_{j=2}^{s}\frac{\beta_{j}}{P_{j}(\alpha_{j})}P_{j}(A)
\end{equation}
is a logarithm of $A$, where
$P=\chi_{A},$
$$B_{1j}=(A-\alpha_{1}I)^{j}\prod\limits_{l=2}^{s}(A-\alpha_{l}I)\sum_{i=0}^{m_{1}-j-1}\sum_{l=2}^{s}\frac{(-1)^{i}
a_{l}}{(\alpha_{1}-\alpha_{l})^{i+1}}(A-\alpha_{1}I)^{i}$$
and
\begin{align*}
a_{l}=\displaystyle\begin{cases}
\frac{1}{\prod\limits_{p=2,p\neq l}^{s}(\alpha_{l}-\alpha_{p})} &\text{if}\quad s\geq3\\
1 &\text{if}\quad s=2
\end{cases}
\end{align*}
\end{corollary}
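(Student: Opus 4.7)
The plan is to derive this statement as a direct corollary of Theorem~\ref{Thhh} combined with the explicit computations already carried out in the proof of Corollary~\ref{Corr102}. Theorem~\ref{Thhh} says that
\[
\sum_{p=1}^{s}\beta_{p}B_{p0}+\sum_{p=1}^{s}\sum_{j=1}^{m_{p}-1}\frac{(-1)^{j-1}}{j\alpha_{p}^{j}}B_{pj}
\]
is a logarithm of $A$. In the present setting we have $m_{p}=1$ for every $p\geq 2$, so the inner sum $\sum_{j=1}^{m_{p}-1}$ is empty for those indices, and the formula collapses to
\[
\beta_{1}B_{10}+\sum_{j=1}^{m_{1}-1}\frac{(-1)^{j-1}}{j\alpha_{1}^{j}}B_{1j}+\sum_{p=2}^{s}\beta_{p}B_{p0},
\]
which is already the shape of \eqref{eq: 4.7}. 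So the first step is just to invoke Theorem~\ref{Thhh} and observe this collapse.

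It then remains to recognize the concrete expressions for $B_{p0}$ ($p\geq 2$) and $B_{1j}$ ($0\leq j\leq m_{1}-1$) appearing in the statement. These are precisely the matrices computed in the proof of Corollary~\ref{Corr102}: for $p\geq 2$, since $m_{p}=1$, formula~\eqref{eq: 1.1} collapses to $L_{p0}(x)[\chi_{A}]=P_{p}(x)g_{p}(\alpha_{p})=P_{p}(x)/P_{p}(\alpha_{p})$, whence $B_{p0}=\frac{1}{P_{p}(\alpha_{p})}P_{p}(A)$. For $B_{1j}$, one performs the partial-fraction expansion
\[
g_{1}(x)=\prod_{l=2}^{s}\frac{1}{x-\alpha_{l}}=\sum_{l=2}^{s}\frac{a_{l}}{x-\alpha_{l}},
\]
with $a_{l}$ as defined in the statement, differentiates to get $g_{1}^{(i)}(\alpha_{1})=\sum_{l=2}^{s}\frac{(-1)^{i}\,i!\,a_{l}}{(\alpha_{1}-\alpha_{l})^{i+1}}$, and substitutes into the defining formula~\eqref{eq: 1.1} for $L_{1j}(x)[\chi_{A}]$; finally one replaces $x$ by $A$ to recover the stated expression for $B_{1j}$.

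Because the substantive work has already been done in the proof of Corollary~\ref{Corr102}, no new difficulty is expected: the only bookkeeping step is to justify the collapse of the double sum under the empty-sum convention, and the rest is a direct transcription. The mildest subtlety is verifying that the hypotheses of Theorem~\ref{Thhh} are met, namely that $A$ is nonsingular and that each $\alpha_{p}=\e^{\beta_{p}}$; both are given explicitly. Thus the proof will consist of a one-sentence invocation of Theorem~\ref{Thhh}, followed by a reminder that the relevant $B_{pj}$ matrices coincide with those already identified in Corollary~\ref{Corr102}.
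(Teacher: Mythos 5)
Your proposal is correct and follows exactly the intended route: the paper states this corollary without proof, but it is the Section~4 counterpart of Corollary~\ref{Corr102}, whose proof contains verbatim the computation of $B_{p0}=\frac{1}{P_{p}(\alpha_{p})}P_{p}(A)$ and of $B_{1j}$ via the partial-fraction expansion of $g_{1}$, and you correctly combine that with the collapse of the double sum in Theorem~\ref{Thhh} under $m_{p}=1$ for $p\geq 2$. Nothing is missing.
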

\begin{example}
We now illustrate this case by computing the principal logarithm of the following matrix
$$A=\begin{pmatrix}
1&2&4&1&0\\0&1&2&0&0\\0&0&2&1&0\\0&0&0&2&1\\0&0&0&2&3
\end{pmatrix}$$
The characteristic polynomial of $A$ is $\chi_{A}(x)=(x-1)^{3}(x-2)(x-4)$.\\
Using the result of the previous corollary, we obtain that the matrix
\begin{equation*}
C=B_{11}-\frac{1}{2}B_{12}+\frac{ln(2)}{P_{2}(2)}P_{2}(A)+\frac{ln(4)}{P_{3}(4)}P_{3}(A)
\end{equation*}
is the principal logarithm of $A$. On the other hand, we can easily check that
$$
\left \{
\begin{array}{rcl}
B_{11}&=&(A-I)(A-2I)(A-4I)(\frac{4}{9}A-\frac{1}{9}I)\vspace*{0.5pc}\\
B_{12}&=&\frac{1}{3}(A-I)^{2}(A-2I)(A-4I)\vspace*{0.5pc}\\
P_{2}&=&(A-I)^{3}(A-4I)\vspace*{0.5pc}\\
P_{3}&=&(A-I)^{3}(A-2I)\\
\end{array}
\right.
$$
Thus the principal logarithm of $A$ is
\begin{equation*}
\renewcommand{\arraystretch}{2.2}
C=\begin{pmatrix}
0&2&8ln(2)-4&\frac{130}{27}ln(2)-\frac{26}{9}&\frac{19}{9}-\frac{86}{27}ln(2)\\
0&0&2ln(2)&\frac{11}{9}ln(2)-\frac{4}{3}&\frac{2}{3}-\frac{7}{9}ln(2)\\
0&0&ln(2)&\frac{5}{6}ln(2)&-\frac{1}{6}ln(2)\\
0&0&0&\frac{2}{3}ln(2)&\frac{2}{3}ln(2)\\
0&0&0&\frac{4}{3}ln(2)&\frac{4}{3}ln(2)
\end{pmatrix}
\end{equation*}
\end{example}
\section{Conclusion}
We have presented a new and elegant method to facilitate the computation of the matrix exponential function. In addition, a closed-form formula for the $n$th power of an arbitrary complex matrix $A$ is provided. This formula allows us to deduce the Chevalley--Jordan decomposition and the spectral projections of $A$. We also deduce from this formula explicit and elegant formulas for the computation of the logarithm and the Drazin inverse of matrices.

\end{document}